\DeclareMathOperator*{\E}{\mathbb{E}}
\DeclareMathOperator*{\Prob}{\mathbb{P}}
\newtheorem{theorem}{Theorem}
\newtheorem{lemma}[theorem]{Lemma}
\newtheorem{conjecture}[theorem]{Conjecture}
\theoremstyle{definition}
\renewcommand\leq{\leqslant}
\renewcommand\geq{\geqslant}
\renewcommand\le{\leqslant}
\renewcommand\ge{\geqslant}
\begin{document}

\title{A Strengthening of Freiman's $3k-4$ Theorem}
\author{B\'ela Bollob\'as \and Imre Leader \and Marius Tiba}

\address{Department of Pure Mathematics and Mathematical Statistics,
Wilberforce Road,
Cambridge, CB3 0WA, UK, and Department of Mathematical Sciences,
University of Memphis, Memphis, TN 38152, USA}\email{b.bollobas@dpmms.cam.ac.uk}

\address{Department of Pure Mathematics and Mathematical Statistics,
Wilberforce Road, Cambridge, CB3 0WA, UK}\email{i.leader@dpmms.cam.ac.uk}

\address{Department of Pure Mathematics and Mathematical Statistics,
Wilberforce Road, Cambridge, CB3 0WA, UK}\email{mt576@cam.ac.uk}

\thanks{The first author was partially supported by NSF grant DMS-1855745}

\begin{abstract}

In its usual form, Freiman's $3k-4$ theorem states that if $A$ and $B$ are subsets of ${\mathbb Z}$ of
size $k$ with small sumset (of size close to $2k$) then
they are very close to arithmetic progressions. Our aim in this paper
is to strengthen this by allowing only a bounded number of possible summands
from one of the sets. We show that if $A$ and $B$ are subsets of ${\mathbb Z}$ of size $k$ such that
for any four-element subset $X$ of $B$ the sumset $A+X$ has size not much more than
$2k$ then already this implies that
$A$ and $B$ are very close to arithmetic progressions.

\end{abstract}

\maketitle

\section{Introduction}\label{intro}

Starting with the classical Cauchy-Davenport theorem \cite{Cauchy, Dav1, Dav2}, the sizes of sumsets in
Abelian groups have been studied in a host of papers. A new direction in this study was introduced in \cite{BLT}:
given sets of integers $A$ and $B$, say, does $B$ have a small subset $B'$ such that $|A+B'|$ is large, perhaps
even comparable to $|A+B|$? In particular, in \cite{BLT} the following result was proved.

\begin{theorem}\label{intro_three_translates_Z}
Let $A$ and $B$ be finite non-empty subsets of $\mathbb{Z}$ with $|A| \geq |B|$. Then there exist elements
$b_1,b_2,b_3 \in B$ such that
$$|A+\{b_1,b_2,b_3\}| \ge |A|+|B|-1.$$
\end{theorem}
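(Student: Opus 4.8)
The plan is to argue by contradiction, choosing a counterexample $(A,B)$ for which $|A|+|B|$ is minimal. One first disposes of the range $|B|\le 3$: there we may take $\{b_1,b_2,b_3\}=B$ (repeating an element if $|B|<3$) and use the elementary bound $|A+B|\ge|A|+|B|-1$, valid for all finite subsets of $\mathbb{Z}$. So $|B|\ge 4$. Next one shows that a minimal counterexample must satisfy $|A|=|B|$: if $|A|>|B|$, then $A':=A\setminus\{\max A\}$ still has $|A'|\ge|B|$, so by minimality there is a triple $T\subseteq B$ with $|A'+T|\ge|A|+|B|-2$; since $\max A+\max T$ is larger than every element of $A'+T$, this forces $|A+T|\ge|A|+|B|-1$, contradicting the choice of $(A,B)$.

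Write $m=|A|=|B|\ge 4$ and translate so that $\min A=\min B=0$. The key point is that in a minimal counterexample even \emph{two} summands never suffice — otherwise, padding the pair out to a triple would contradict minimality — and since $|A+\{b,b'\}|=2m-|A\cap(A+(b'-b))|$, this rigidity reads
\[
|A\cap(A+t)|\ge 2\qquad\text{for every }t\in(B-B)\setminus\{0\}.
\]
Now one exploits a triviality: for a finite set $S$ and any $t\ge\max S-\min S$ we have $S\cap(S+t)\subseteq\{\max S\}$, so $|S\cap(S+t)|\le 1$. Taking $S=A$ and $t=\max B-\min B$ shows that the case $\max B-\min B\ge\max A-\min A$ is impossible; hence $A$ is strictly wider than $B$. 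Moreover, since $\{b,b',\max B\}$ (resp.\ $\{b,b',\min B\}$) is a genuine triple whenever $b,b'$ lie in $B\setminus\{\max B\}$ (resp.\ $B\setminus\{\min B\}$), the same padding argument upgrades the displayed inequality to $|A\cap(A+t)|\ge 3$ for every nonzero $t\in B-B$ other than $\pm(\max B-\min B)$; deleting further points of $B$ yields constraints of the same flavour.

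To conclude I would combine such sharpened rigidity with the exact identities $\sum_{t\ge 1}|A\cap(A+t)|=\binom{m}{2}$ and $|A\cap(A+(\max A-\min A))|=1$ (the extreme difference of $A$ occurs exactly once), together with the fact that an $m$-element set $B$ has at least $m-1$ positive differences, to reach a numerical contradiction. \emph{The hard part} is making this final step uniform in $m$: the crude count settles only small $m$, and, more seriously, a single extra translate $A+b$ may have to contribute as many as $m-2$ of the missing sums, a deficit that could a priori be spread thinly across all the interior translates, so a naive averaging argument cannot work. The real content is to show that the rigidity forced by the failure of all smaller choices — every translate of $A$ by a difference of $B$ being almost fixed — is incompatible with $|B|=m$ once $A$ is the wider set; carrying the peeling argument far enough, or alternatively extracting the arithmetic‑progression structure of the rigid set $A$ outright, is where I would concentrate the effort.
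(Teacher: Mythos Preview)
Your proposal is not a proof: you set up an interesting minimal-counterexample framework and extract the rigidity constraints $|A\cap(A+t)|\ge 2$ (and $\ge 3$ for interior differences), but you then explicitly admit that the decisive step---turning these constraints into a contradiction---is missing. The crude count $\sum_{t\ge 1}|A\cap(A+t)|=\binom{m}{2}$ against $3(m-2)+2$ only bites for $m\le 5$, and your peeling argument gains at most one new element per added extreme point of $B$, so iterating it cannot close the gap for large $m$. The structural route you gesture at (``extracting the arithmetic-progression structure of the rigid set $A$ outright'') would essentially require a Freiman-type inverse theorem as an input, which is far heavier than what the result demands.

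The paper's argument is completely different and avoids all of this. It is not by contradiction and does not use minimality. Normalise so that $\min B=0$, $\max B=m$, take $b_1=0$, $b_2=m$, and choose $b_3$ uniformly at random from $B\setminus\{m\}$. The key observation is that on each fibre $x\bmod m$ with $x\in\pi_m(A)$ the set $A\cup(A+m)$ has at least $|A^x|+1$ elements, so $|A\cup(A+m)|\ge |A|+|\pi_m(A)|$. One then shows that the expected number of extra elements contributed by $A+b_3$ is at least $|A|\cdot\max\bigl(0,1-|\pi_m(A)|/|\pi_m(B)|\bigr)$. If $|\pi_m(A)|\ge|\pi_m(B)|=|B|-1$ the first bound alone already gives $|A|+|B|-1$; otherwise the two bounds together do. This direct averaging makes your ``deficit spread thinly'' worry disappear: the projection mod $m$ is exactly the device that aggregates the contributions of the third translate in the right way.
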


Freiman's
$3k-4$ theorem (see
Freiman \cite{Frei-59,Frei-62} and also 
Lev and
Smeliansky~\cite{LevSme} and Stanchescu~\cite{Stan}), states the following, in the case where the
sets have the same size.

{\em Let $A$ and $B$ be finite subsets of ${\mathbb Z}$ with $|A|=|B|=k$ such that $|A+B|=2k-1+r$, where
$r \leq k-3$. Then there are arithmetic progressions $P$ and $Q$ of the same common difference,
containing $A$ and $B$ respectively, such that each has size $k+r$.}

A result of this kind is often called an `inverse' theorem, as it describes what happens when an inequality is close to being tight. (These are also often known as `stability'
results.)

What about a `bounded sumset' version of Freiman's $3k-4$ theorem?
Our aim in this paper is to show that indeed we can weaken the condition that
$A+B$ is small to just a condition that all sumsets of $A$ with a bounded number of terms of $B$ are 
small. This is really rather surprising. One could view this
as a kind of inverse result to Theorem~\ref{intro_three_translates_Z}. 

\begin{theorem}\label{linear_stability_four_translates}
There are constants  $c, \varepsilon_0>0$ such that for all $\varepsilon <  \varepsilon_0$ the
following holds. Let $A$ and $B$ be finite non-empty
subsets of $\mathbb{Z}$ with $n=|A| = |B|$. Suppose that for any four elements
$b_1,b_2,b_3,b_4 \in B$
we have $$|A+\{b_1, b_2,b_3,b_4\}| \leq (2 + \varepsilon)n-1.$$ Then there are arithmetic progressions
$P,Q$ in $\mathbb{Z}$ of size $(1+\varepsilon+c \varepsilon^2)n$ with the same common difference
such that $ B \subset Q$ and $$ |A \Delta P| \leq c \varepsilon n.$$
\end{theorem}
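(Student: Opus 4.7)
The plan is to bootstrap from Theorem~\ref{intro_three_translates_Z}. Apply it to obtain $b_1, b_2, b_3 \in B$ with $|A + \{b_1, b_2, b_3\}| \geq 2n-1$, and let $S = A + \{b_1, b_2, b_3\}$. Since $S \subseteq A + \{b_1, b_2, b_3, b\}$ for any additional $b \in B$, the four-translate hypothesis forces $|S| \leq (2+\varepsilon)n - 1$, and for every $b \in B$,
\[ |(A+b) \setminus S| \leq |A + \{b_1, b_2, b_3, b\}| - |S| \leq \varepsilon n. \]
Thus every translate $A+b$ with $b \in B$ sits almost entirely in the structured set $S$, whose size is only slightly more than $2n$.

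The heart of the argument is to extract a common difference $d$ such that $A$ is nearly $d$-periodic. After a translation, write $S = A \cup (A+u) \cup (A+v)$ with $u = b_2 - b_1$ and $v = b_3 - b_1$. Inclusion–exclusion yields
\[ |A \cap (A+u)| + |A \cap (A+v)| + |A \cap (A+(v-u))| \geq (1-\varepsilon)n, \]
so one of the three differences in $\{u, v, v-u\}$ produces substantial overlap. Pigeonhole alone is too weak, so I would bootstrap by selecting the best such difference $d$ and then applying the four-translate hypothesis to varying subsets $\{b_1, b_2, b_3, b_4\}$ with $b_4 \in B$ to rule out configurations in which a large chunk of $A$ fails to be $d$-periodic. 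The target is $|A \cap (A+d)| \geq (1 - c'\varepsilon)n$ for some absolute constant $c'$.

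Given such a $d$, the set $A$ decomposes into at most $c'\varepsilon n$ arithmetic progressions of common difference $d$, so there is an AP $P$ of common difference $d$ with $|A \Delta P| = O(\varepsilon n)$ and $|P| \leq (1+O(\varepsilon))n$. For $B$, note that $S$ inherits a $d$-AP structure from $A$ (being a union of three translates of $A$), and each $b \in B$ satisfies $A+b \subseteq S$ up to $\varepsilon n$ exceptions; this alignment forces $b$ to lie in an AP $Q$ of common difference $d$. A careful accounting of overlaps then yields the quantitative bounds $|P|, |Q| \leq (1+\varepsilon+c\varepsilon^2)n$ and the exact containment $B \subset Q$.

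The main obstacle is the bootstrapping in the second paragraph. The four-translate hypothesis is a single inequality that does not cleanly separate the contributions of $u$, $v$, and $v-u$ to the overlap; the flexibility lies in varying $b_4$ over $B$, and one must exploit this freedom to isolate a dominant common difference $d$. A secondary difficulty is obtaining the exact containment $B \subset Q$ rather than just $|B \setminus Q| = O(\varepsilon n)$; this requires showing that any would-be outlier $b \in B$ generates $(A+b) \setminus S$ of size too large to be compatible with the bound $(2+\varepsilon)n - 1$.
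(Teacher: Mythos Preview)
Your proposal has a genuine gap at its core step. You want to find a single difference $d\in\{u,v,v-u\}$ with $|A\cap(A+d)|\ge(1-c'\varepsilon)n$, but in the canonical situation this is impossible. Take $A=B=\{0,d',2d',\dots,(n-1)d'\}$. The three points produced by Theorem~\ref{intro_three_translates_Z} are $0$, $m=(n-1)d'$, and a further point $b=jd'$; then $u=m$ gives $|A\cap(A+u)|=1$, while $v=jd'$ and $v-u=(j-n+1)d'$ give overlaps $n-j$ and $j+1$. None of these is within $O(\varepsilon n)$ of $n$ unless $j$ happens to be within $O(\varepsilon n)$ of an endpoint, which you have no control over. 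So the differences $u,v,v-u$ are the wrong candidates for the common difference of the target progression: they are typically large multiples of it. Your bootstrapping paragraph does not explain how varying $b_4$ over $B$ would rescue this, and I do not see a mechanism.

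There are two further gaps. First, even granting $|A\cap(A+d)|\ge(1-c'\varepsilon)n$, the conclusion ``$A$ decomposes into at most $c'\varepsilon n$ arithmetic progressions of step $d$'' does not imply $A$ is close to a \emph{single} progression of step $d$; two long $d$-progressions far apart satisfy the overlap bound. Second, the sharp size $(1+\varepsilon+c\varepsilon^2)n$ for $P$ and $Q$ is the most delicate part of the theorem and cannot be obtained by ``careful accounting''; in the paper it requires a separate technical result (Theorem~\ref{stab_technical}) applied twice on top of a weak version (Theorem~\ref{weak_linear_stability_four_translates}).

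The paper's route is quite different from yours. It fixes $b_1=0$, $b_2=m=\max B$ and studies the projection $\pi_m:A\to\Z_m$. Using two \emph{random} further points of $B$ and analysing how their translates interact, it shows $|\pi_m(A)|\ge(1-O(\varepsilon^{1/4}))n$; this feeds into a Kneser-type argument showing $\widetilde B+\widetilde B$ is a coset, whence $B$ lies in a short progression $Q$. A separate argument then places $A$ near a translate of $Q$. This yields only $\delta(\varepsilon)\to 0$ bounds; the linear-in-$\varepsilon$ error and the $(1+\varepsilon+c\varepsilon^2)n$ length come from a second, substantially longer bootstrapping theorem. The key object throughout is the modulus $m$, not any difference $b_i-b_j$.
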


\vspace{5pt} The key point in this result is that the sizes of the progressions $P$ and $Q$ are `about'
$(1 + \varepsilon)n$: the precise form of the error term in this, as quadratic in $\varepsilon$, is not so important. 
Note that the dependence of the error terms (the sizes of $|A \Delta P|$ and $|B \Delta Q|$),
as linear functions of $\varepsilon$, is best possible. This may be seen by taking $B$ to be
an interval of length $n$ and $A$ to consist of of an interval of length $(1-2\varepsilon)n$ together with, on either
side of it, random half-sized subsets of the adjacent intervals of length $2 \varepsilon n$.

\vspace{5pt} We remark that the condition that $A$ has small symmetric difference with $P$ cannot 
be strengthened to insist that $A$ lies inside $P$. This is because one may always have a few
`rogue' points in $A$, far from the rest of $A$. The same would not apply to $B$, because adding a
few faraway points to $B$ is a great help in selecting the four points.

\vspace{5pt}
It would be extremely interesting to decide whether this result remains valid if we consider three
elements instead of four. To elaborate on this, the way that Theorem 1 is proved is by taking the
three points of $B$ as follows: we first take the smallest and
largest points of $B$ (the reader will see that we
{\it have} to take those points, as may be seen from the case when the two sets are intervals), and 
then choosing the third point at random from the remaining points of $B$. It is very surprising that
these two different ingredients mesh together so well, to give exactly the lower bound required.
However, and this is the key point, this approach {\it cannot} be   
used to prove a three-element version of Theorem~\ref{linear_stability_four_translates}. Indeed, this
may be seen by taking $B=[0,n]$ and $A=X \cup (X+n)$ where $X$ is a random set of density $1/2$ in an
interval of length $n$.     

\vspace{10pt}
The plan of the paper is as follows. In Section~2 we mention some background results from \cite{BLT} that we will need. We do give a brief summary of the proof of
Theorem~\ref{intro_three_translates_Z}, to help make the paper self-contained and also because the methods
there form a good `toy case' of the arguments in our main result. Indeed, those background results
from \cite{BLT} that we do not prove here may be proved by methods very similar to those in the proof
of Theorem~\ref{intro_three_translates_Z}.
Then in Section~3 we present our main result. 

Our notation is standard. To make our paper more readable, we often omit integer-part signs when these do not affect the
argument.

\vspace{5pt}
Sometimes we write `$x \mod d$' as shorthand for the infinite arithmetic progression
$\{ y \in \mathbb{Z}: y \equiv x \mod d \}$, and refer to it as a {\em fibre} mod $d$.
When $S$ is a subset of $\mathbb{Z}$ we often write $S^x$ for the intersection of this fibre with
$S$ -- when the value of $d$ is clear. (We sometimes write $S^x$ as $S^x_d$ when we want to stress the value of $d$.)
Thus $S^x=S\cap \pi^{-1}(x)$, where $\pi = \pi_d$ denotes
the natural projection from $\mathbb{Z}$ to $\mathbb{Z}_d$. We also write $\widetilde{S}$
for $\pi_d(S)$.

\vspace{5pt}
When we write a probability or an expectation over a finite set, we always assume that the elements
of the set are being sampled uniformly. Thus, for example,
for a finite set $X\subset \mathbb{Z}$ we denote the expectation and probability when we sample uniformly over
all $x \in X$ by respectively $\E_{x\in X} \text{ and } \Prob_{x\in X}.$
%
%

\vspace{5pt}
For more general background on sumsets, see the survey of Breuillard, Green and Tao \cite{BGT-doubling} or the
books of Nathanson \cite{Nathbook} or Tao and Vu \cite{taobook}. For extensions of Freiman's $3k-4$ theorem
to other settings, such as $\mathbb{Z}_p$ in particular, see Grynkiewicz~\cite{grynkiewicz-2}.

\vspace{5pt}
To end the Introduction, we give a brief overview of the proof of Theorem~\ref{linear_stability_four_translates}.
With $B$ having first element $0$ and last element $m$, we start by showing that (unless we are done) $A$ must have a
large projection
onto ${\mathbb Z}_m$. This is accomplished by a careful analysis of what happens when we take our four points
of $B$ to be $0$, $m$ and two random points: it is the interaction of the two random points that is critical
here. From this we find that $B$ is contained in a quite short arithmetic progression, and in turn this will imply
that $A$ is also relatively close to an arithmetic progression. However, these progressions are not small enough to
give Theorem~\ref{linear_stability_four_translates}, and so we need an argument that `boosts' this. This is a very
delicate analysis of, again, how the two random translates interact with each other and with the two fixed
translates. In fact, it is rather surprising that this boosting argument actually works: in length it is
most of the proof. (It is Theorem 9 below).

\section{Prerequisites}

\vspace{7pt}
\noindent
We start by giving an indication of the proof of Theorem \ref{intro_three_translates_Z}.
\vspace{5pt}
As explained above, the main idea is to fix two elements as the first and last elements of $B$ and then to select
the remaining element at random.

\vspace{5pt}
\noindent
Let $B$ have first element $0$ and last element $m$. Then $A+\{0, m\}=A\cup (A+m)$ satisfies
$\pi_m(A)=\pi_m\big(A\cup (A+m)\big)= \widetilde{A}$, and
\begin{equation}\label{new-sum-eq}
\bigg|A\cup (A+m) \bigg| \geq |A|+|\widetilde{A}|.
\end{equation}
To see this, note that $(A+m)^x=A^x+m$ for every $x\in {\mathbb Z}$. Hence, if $x\in \widetilde{A}=\pi_m(A)$ then
$(A+m)^x=A^x+m\ne A^x$,
so
$
\bigg| \big[A\cup (A+m)\big]^x\bigg|=\bigg|A^x\cup (A+m)^x \bigg| \geq |A^x|+1.
$
Consequently,
\[
\bigg|A\cup (A+m)\bigg| \geq \sum_{x\in \widetilde{A}} \bigg|[A\cup (A+m)]^x\bigg| \geq \sum_{x\in \widetilde{A}} |A^x|+1 = |A|+|\widetilde{A}|,
\]
completing the proof of \eqref{new-sum-eq}.

We mention that the way that the proof of Theorem~\ref{intro_three_translates_Z} in \cite{BLT} now proceeds is to
consider the excess contribution to $|A\cup (A+m)|$ that comes when we form the union with $A+b$, where $b$ is chosen
uniformly at random from the remaining points of $B$. One finds that this has expectation at least $|A|$ times
$1 - |\widetilde{A}| / |\widetilde{B}|$. If this is negative then the bound \eqref{new-sum-eq} is already enough to
finish the proof, while if it is positive then it is easy to check that together with \eqref{new-sum-eq} it gives
the required bound.

This actually gives the following
stronger version of Theorem~\ref{intro_three_translates_Z}. Let $A$ and $B$ be finite non-empty subsets of integers with $|A|\geq |B|$ and $B$ having smallest element $0$ and greatest element $m$.  Then we have
\begin{equation}\label{thm1.0.0}
     \max_{b_1,b_2,b_3 \in B}\bigg|A+\{b_1,b_2,b_3\}\bigg| \geq \E_{b\in B\setminus \{m\}} \bigg| A +\{0,b,m\} \bigg| \geq |A|+|B|-1.
\end{equation}

\vspace{5pt}
We now pass to some related results from \cite{BLT}. These are proved along broadly
similar lines to
the above results. The first one is about the case of equality.

\begin{theorem}\label{equality_four_translates}
  Let $A$ and $B$ be finite non-empty subsets of $\mathbb{Z}$ with $|A| = |B|$, with $\min(B) = 0$ and $\max(B)=m$.
  Suppose that when we choose an element $b$ of $B \setminus \{0,m\}$ uniformly at random we have
\begin{equation}\label{thm23.0.0}
    \E_{b \in B \setminus \{0,m\}} \bigg|A+\{0,b,m\}\bigg| \leq |A|+|B|-1.
\end{equation}
Then $A$ and $B$ are arithmetic progressions with the same common difference.

\vspace{7pt}
In particular, suppose that when we choose any three elements $b_1, b_2, b_3$ of $B$ we have
\[
\bigg|A+\{b_1,b_2,b_3\}\bigg| \le |A|+|B|-1.
\]
Then $A$ and $B$ are arithmetic progressions with the same common difference.
\end{theorem}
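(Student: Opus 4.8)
The plan is to feed the hypothesis into the known lower bound \eqref{thm1.0.0} so as to pin $|A+B|$ to its smallest possible value $|A|+|B|-1$, and then to invoke Freiman's $3k-4$ theorem in the degenerate case $r=0$. First I would dispose of small cases: assume $n:=|A|=|B|\ge 3$ (otherwise $B\setminus\{0,m\}$ is empty and the averaging hypothesis is vacuous; the final ``in particular'' claim is immediate for $n\le 2$, the only real check being that $|A\cup(A+m)|\le 3$ with $|A|=2$ forces $A=\{a,a+m\}$). Note that $B\setminus\{0,m\}$ and $B\setminus\{m\}$ are then non-empty.

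Next I would write $S:=A\cup(A+m)=A+\{0,m\}$. Since $|A+\{0,0,m\}|=|S|$ while $|A+\{0,b,m\}|\ge|S|$ for every $b$, splitting $B\setminus\{m\}=\{0\}\cup(B\setminus\{0,m\})$ gives
\[
\E_{b\in B\setminus\{m\}}|A+\{0,b,m\}|=\frac{|S|+(n-2)\,\E_{b\in B\setminus\{0,m\}}|A+\{0,b,m\}|}{n-1}.
\]
The left-hand side is at least $2n-1$ by \eqref{thm1.0.0} (applicable since $|A|\ge|B|$, $\min B=0$ and $\max B=m$), while the nested expectation on the right is at most $2n-1$ by the hypothesis \eqref{thm23.0.0}; rearranging these two bounds yields $|S|\ge 2n-1$. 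On the other hand each term satisfies $|A+\{0,b,m\}|\ge|S|\ge 2n-1$, so the average over $b\in B\setminus\{0,m\}$ is at least $2n-1$, and comparing once more with \eqref{thm23.0.0} forces it to equal $2n-1$; hence every term equals $2n-1$ and $|S|=2n-1$. In particular, for each $b\in B\setminus\{0,m\}$ we get $|S\cup(A+b)|=|S|$, that is $A+b\subseteq S$, and this holds trivially for $b\in\{0,m\}$ as well.

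Therefore $A+B=\bigcup_{b\in B}(A+b)\subseteq S$, while also $A+B\supseteq A+\{0,m\}=S$ because $0,m\in B$; so $A+B=S$ and $|A+B|=2n-1$. I would then apply Freiman's $3k-4$ theorem with $k=n$ and $r=0$ (legitimate as $0\le n-3$) to get arithmetic progressions $P\supseteq A$ and $Q\supseteq B$ of size $n$ with the same common difference; since $|A|=|B|=n$ this forces $A=P$ and $B=Q$, so $A$ and $B$ are arithmetic progressions with the same common difference. The ``in particular'' statement follows at once: if $|A+\{b_1,b_2,b_3\}|\le|A|+|B|-1$ for every triple from $B$, then taking $\{b_1,b_2,b_3\}=\{0,b,m\}$ for each $b\in B\setminus\{0,m\}$ shows \eqref{thm23.0.0} holds, reducing to the first part.

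I do not expect a serious obstacle. The one delicate point is orchestrating the two averages so that \eqref{thm1.0.0} and \eqref{thm23.0.0} squeeze $|S|$ from opposite sides; the conceptual content is simply the realization that the three-translate (equality) hypothesis collapses the configuration all the way to the extremal case $|A+B|=|A|+|B|-1$, after which the classical inverse theorem does the rest. (Should a self-contained argument be wanted, one could instead read off the structure of $A$ directly from $|A\cup(A+m)|=2n-1$ together with $A+B\subseteq A\cup(A+m)$, using fibre arguments of the kind above, but quoting $3k-4$ is far shorter.)
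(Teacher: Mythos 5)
Your proof is correct, and it is worth noting that the paper itself does not prove Theorem~\ref{equality_four_translates} --- it is quoted from \cite{BLT}, with the remark that the argument there runs ``along broadly similar lines'' to the fibre/random-translate sketch given for Theorem~\ref{intro_three_translates_Z}, i.e.\ it is a direct structural argument. Your route is genuinely different: you use the averaging identity
\[
\E_{b\in B\setminus\{m\}}|A+\{0,b,m\}| \;=\; \frac{|S|+(n-2)\,\E_{b\in B\setminus\{0,m\}}|A+\{0,b,m\}|}{n-1},
\qquad S := A\cup(A+m),
\]
to squeeze $|S|$ from both sides between the lower bound \eqref{thm1.0.0} and the hypothesis \eqref{thm23.0.0}, obtaining $|S|=2n-1$ and hence $A+b\subseteq S$ for every $b\in B$, i.e.\ $A+B=S$ and $|A+B|=2n-1$; you then delegate the structural conclusion to Freiman's $3k-4$ theorem in its degenerate case $r=0$. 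The deduction is sound: the identity correctly splits $B\setminus\{m\}$ into $\{0\}$ and $B\setminus\{0,m\}$, the monotonicity $|A+\{0,b,m\}|\ge|S|$ is immediate since $\{0,m\}\subseteq\{0,b,m\}$, and the small cases $n\le 2$ (where $B\setminus\{0,m\}$ is empty, or where $r\le k-3$ would fail) are correctly disposed of by hand. The trade-off is clear: your proof is shorter and conceptually crisp, at the cost of importing the classical $3k-4$ inverse theorem as a black box, whereas the \cite{BLT} proof is self-contained within the random-translates framework that the rest of the paper is built on --- which matters for \cite{BLT} but is harmless in the present context, where Freiman's theorem is already stated and cited in the introduction.
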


Then we need another version of Theorem \ref{intro_three_translates_Z}.

\begin{theorem}\label{technical_three_translates_Z}
Let $A$ and $B$ be finite non-empty subsets of $\mathbb{Z}$, with $\min(B) = 0$ and $\max(B)=m$. Then
$$\E_{b\in B \setminus \{m\}}\bigg|A+\{0,b,m\} \bigg| \ge |A|+|\pi_m(A)|+|A|\max\bigg(0, \frac{|B|-1-|\pi_m(A)|}{|B|-1} \bigg).$$
\end{theorem}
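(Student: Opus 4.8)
The plan is to refine the computation that was sketched in the text between \eqref{new-sum-eq} and the proof of Theorem~\ref{intro_three_translates_Z}, keeping careful track of the excess when we add the random translate $A+b$. Write $\pi=\pi_m$, $\widetilde A=\pi_m(A)$, $\widetilde B=\pi_m(B)$, and recall $|\widetilde B|=|B|-1$ since $0$ and $m$ lie in the same fibre. For a fixed $b\in B\setminus\{m\}$ we have the identity
\[
\bigl|A+\{0,b,m\}\bigr|=\bigl|A\cup(A+m)\bigr|+\sum_{x\in\widetilde A}\Bigl(\bigl|[A\cup(A+m)\cup(A+b)]^x\bigr|-\bigl|[A\cup(A+m)]^x\bigr|\Bigr),
\]
where every set appearing has support inside the fibres indexed by $\widetilde A+\pi(b)$ on the left of the increment and inside $\widetilde A$ on the right; more precisely $(A+b)^x=A^{x-\pi(b)}+b$. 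So the first step is to lower-bound, for each $x$, the extra contribution of $A+b$: the fibre $[A\cup(A+m)\cup(A+b)]^x$ contains $[A\cup(A+m)]^x$ together with the translate $A^{x-\pi(b)}+b$, and since $A^{x-\pi(b)}+b$ is a single translate of a subset of the integers while $A^x$ and $A^x+m$ are two translates of $A^x$ that genuinely differ, one should argue that adding this third translate increases the fibre size by at least $1$ whenever the fibre $x-\pi(b)$ of $A$ is nonempty and we are not in a degenerate coincidence. I would make this precise by noting that $|A^x\cup(A^x+m)|\ge|A^x|+1$ already (as in \eqref{new-sum-eq}) and that a single extra translate $A^{x-\pi(b)}+b$ of a nonempty set cannot be entirely swallowed unless its cardinality is very small relative to the gap structure; the clean statement to aim for is that the increment in fibre $x$ is at least $\id[x-\pi(b)\in\widetilde A]$ minus a correction accounted for by the at most $|A^{x-\pi(b)}|$ new elements already being present.

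The cleaner route, and the one I would actually pursue, is to bound the increment from below by $\bigl|A^{x-\pi(b)}\bigr|+\bigl|A^x\bigr|$ minus what is already there: summing $\bigl|[A\cup(A+m)\cup(A+b)]^x\bigr|\ge\bigl|A^x\cup(A^x+m)\cup(A^{x-\pi(b)}+b)\bigr|$ over all $x\in\widetilde{\mathbb Z}_m$ and using the same two-translates-differ argument twice, one gets
\[
\bigl|A+\{0,b,m\}\bigr|\ \ge\ |A|+\bigl|\widetilde A\bigr|+\#\{x\in\widetilde A+\pi(b):\ x\notin\widetilde A\ \text{or (a mild nondegeneracy)}\}.
\]
Taking the expectation over $b\in B\setminus\{m\}$, the last term becomes, by linearity and Fubini, $|A|$ times the average over $b$ of $\Prob_{x}\bigl[x-\pi(b)\in\widetilde A,\ x\notin\widetilde A\bigr]$-type quantity; a short double-counting shows $\E_{b\in B\setminus\{m\}}\bigl|(\widetilde A+\pi(b))\setminus\widetilde A\bigr|\ge|A|\cdot\frac{|B|-1-|\widetilde A|}{|B|-1}$ when the right-hand side is positive, because for each of the $|\widetilde A|$ residues of $\widetilde A$, the translates $\widetilde A+\pi(b)$ for $b$ ranging over a set of $|B|-1$ residues $\widetilde B$ can cover it for at most $|\widetilde A|$ values of $\pi(b)$ on average — here one uses $|\widetilde B|=|B|-1$. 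Combined with the trivial bound $\bigl|A+\{0,b,m\}\bigr|\ge|A|+|\widetilde A|$ from \eqref{new-sum-eq} (which handles the case where the stated maximum with $0$ is attained at $0$), this yields exactly the claimed inequality.

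The main obstacle I anticipate is the nondegeneracy issue in the per-fibre increment: it is not literally true that throwing in a third translate $A^{x-\pi(b)}+b$ of a nonempty set always strictly enlarges the fibre, because $A^{x-\pi(b)}+b$ could in principle be contained in $A^x\cup(A^x+m)$. The fix is to be more careful about \emph{how much} the two translates $A^x,A^x+m$ already overlap: one shows that $\bigl|A^x\cup(A^x+m)\cup(A^{x-\pi(b)}+b)\bigr|\ge \max\bigl(|A^x|+1,\ |A^{x-\pi(b)}|\bigr)+\id[A^x\neq\emptyset]$ or some such, and then the averaging absorbs the slack. Concretely I would separate, for each target residue, the contribution $|A^x|$ that is always there from the genuinely new contribution, and only claim the $+1$ per fibre on those fibres $x$ with $x\in\widetilde A+\pi(b)$ \emph{and} $x\notin\widetilde A$ — on such fibres $[A\cup(A+m)]^x=\emptyset$, so the third translate contributes $|A^{x-\pi(b)}|\ge 1$ fresh elements with no competition, which is clean. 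This is precisely why the bound is stated with $|\pi_m(A)|$ appearing twice and with the $\max(0,\cdot)$ truncation: the first $|\pi_m(A)|$ comes from \eqref{new-sum-eq} on fibres inside $\widetilde A$, and the $|A|\cdot\frac{|B|-1-|\widetilde A|}{|B|-1}$ comes from summing $|A^{x-\pi(b)}|$ over fibres $x\in(\widetilde A+\pi(b))\setminus\widetilde A$, whose expected number of such $x$ (weighted by $|A^{x-\pi(b)}|$) is at least $|A|$ times the displayed fraction by the covering/double-counting estimate. Once that averaging lemma is isolated and proved, assembling the three pieces is routine.
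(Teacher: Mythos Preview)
Your final paragraph lands on exactly the right argument, and it is the same one the paper sketches (the proof is not given in full here but is referred to as following the method behind \eqref{new-sum-eq} and Lemma~\ref{lem8.2}): split $|A+\{0,b,m\}|=|A\cup(A+m)|+|(A+b)\setminus(A\cup(A+m))|$, use $|A\cup(A+m)|\ge|A|+|\widetilde A|$, and for the second term keep only the fibres $x\in(\widetilde A+\pi(b))\setminus\widetilde A$, on which $[A\cup(A+m)]^x=\emptyset$ so $(A+b)^x=A^{x-\pi(b)}+b$ contributes in full. Averaging, $\E_{b\in B\setminus\{m\}}\sum_{y\in\widetilde A}|A^y|\,\id[y+\pi(b)\notin\widetilde A]=\sum_{a\in A}\Prob_b[\pi(b)\notin\widetilde A-\pi(a)]\ge|A|\cdot\frac{|B|-1-|\widetilde A|}{|B|-1}$, using that $\pi$ is injective on $B\setminus\{m\}\subset[0,m-1]$. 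That is the whole proof.

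Two things to clean up. First, your displayed inequality in the ``cleaner route'' paragraph,
\[
\E_{b\in B\setminus\{m\}}\bigl|(\widetilde A+\pi(b))\setminus\widetilde A\bigr|\ \ge\ |A|\cdot\frac{|B|-1-|\widetilde A|}{|B|-1},
\]
is false as written: the left side counts \emph{residues} and is at most $|\widetilde A|$, while the right side scales with $|A|$. What you need (and what you correctly describe in words afterwards) is the \emph{weighted} version $\E_b\sum_{x\in(\widetilde A+\pi(b))\setminus\widetilde A}|A^{x-\pi(b)}|\ge|A|\cdot\frac{|B|-1-|\widetilde A|}{|B|-1}$, equivalently $\E_b|(A+b)\setminus\pi^{-1}(\widetilde A)|\ge|A|\cdot\frac{|B|-1-|\widetilde A|}{|B|-1}$; this is precisely Lemma~\ref{lem8.2}. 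Second, the entire first paragraph and the worry about ``nondegeneracy'' of the third translate inside a nonempty fibre is a detour you do not need: the argument never asks for any gain on fibres $x\in\widetilde A$, so there is nothing to rescue there. Drop those parts and the write-up becomes short and clean.
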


We note also a simple variant of the ideas above.

\begin{lemma}\label{lem8.2} Let $A$, $B$ and $A_1$ be finite non-empty subsets of $\mathbb{Z}$, with $\min(B)=0$ and $\max(B)=m$. Then with
  $\widetilde{A}=\pi_m(A)$ and $\widetilde{B}=\pi_m(B)$ we have 
$$\E_{b\in B \setminus \{m\}} \bigg|(A_1+b) \setminus \pi_m^{-1}(\widetilde{A})\bigg| \geq
|A_1| \max\bigg(0,\frac{|\widetilde{B}|-|\widetilde{A}|}{|\widetilde{B}|}\bigg).$$
\end{lemma}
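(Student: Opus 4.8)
The plan is to run the same fibre-by-fibre counting argument that underlies \eqref{new-sum-eq}, but now keeping track only of those points of $A_1+b$ that land outside the ``already occupied'' union of fibres $\pi_m^{-1}(\widetilde A)$. First I would fix $b\in B\setminus\{m\}$ and decompose $A_1+b$ according to its fibres mod $m$: writing $\widetilde{A_1+b}=\pi_m(A_1+b)$, we have $|(A_1+b)\setminus\pi_m^{-1}(\widetilde A)| = \sum_{x\in \widetilde{A_1+b}\setminus\widetilde A}|(A_1+b)^x|$, since a fibre of $A_1+b$ contributes nothing to the left-hand side precisely when its projection already lies in $\widetilde A$. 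The weakest useful bound here is simply $|(A_1+b)^x|\ge 1$ for each $x$ in the sum, giving $|(A_1+b)\setminus\pi_m^{-1}(\widetilde A)|\ge |\widetilde{A_1+b}\setminus\widetilde A| = |\pi_m(A_1)+\pi_m(b)\setminus\widetilde A|$, using that translation by $b$ on $\mathbb Z$ induces translation by $\pi_m(b)$ on $\mathbb Z_m$.

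Next I would take the expectation over $b\in B\setminus\{m\}$. The key observation is that as $b$ ranges over $B\setminus\{m\}$, the projection $\pi_m(b)$ ranges over (a subset of) $\widetilde B$, and in fact every element of $\widetilde B$ is hit (note $\pi_m(m)=\pi_m(0)=0$, so dropping $m$ does not remove any residue class). So it suffices to lower bound $\E_{t\in \widetilde B}\,|(\widetilde{A_1}+t)\setminus \widetilde A|$, where $\widetilde{A_1}=\pi_m(A_1)$; a minor point is that sampling $b$ uniformly from $B\setminus\{m\}$ is not quite the same as sampling $t$ uniformly from $\widetilde B$, but since $|(\widetilde{A_1}+t)\setminus\widetilde A|\le |\widetilde{A_1}|\le |A_1|$ for every $t$ and, more importantly, the inequality we want only involves an averaged quantity, I can pass to the cyclic-group averaging by a convexity/rearrangement argument, or simply observe that the fibre-counting already respects the $B\setminus\{m\}$-average directly. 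For each $t$, $|(\widetilde{A_1}+t)\setminus\widetilde A|\ge |\widetilde{A_1}|-|\widetilde A|$ trivially (as $|(\widetilde{A_1}+t)\cap\widetilde A|\le|\widetilde A|$), and it is also $\ge 0$; averaging a quantity that is both nonnegative and at least $|\widetilde{A_1}|-|\widetilde A|$ gives a clean bound, but that alone yields the weaker $|A_1|-|\widetilde A|$ rather than the claimed $|A_1|(|\widetilde B|-|\widetilde A|)/|\widetilde B|$.

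To get the stated sharper bound I would instead count incidences globally in $\mathbb Z_m$: $\E_{t\in\widetilde B}|(\widetilde{A_1}+t)\cap\widetilde A| = \frac{1}{|\widetilde B|}\sum_{t\in\widetilde B}\sum_{a\in\widetilde{A_1}}\id[a+t\in\widetilde A] = \frac{1}{|\widetilde B|}\sum_{a\in\widetilde{A_1}}|\{t\in\widetilde B: a+t\in\widetilde A\}| \le \frac{1}{|\widetilde B|}\sum_{a\in\widetilde{A_1}}|\widetilde A| = |\widetilde{A_1}|\,|\widetilde A|/|\widetilde B|$, hence $\E_{t\in\widetilde B}|(\widetilde{A_1}+t)\setminus\widetilde A|\ge |\widetilde{A_1}|(1-|\widetilde A|/|\widetilde B|) = |\widetilde{A_1}|(|\widetilde B|-|\widetilde A|)/|\widetilde B|$. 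Combining with $|\widetilde{A_1}|\le|A_1|$ is the wrong direction, so here I must be a little careful and keep $|\widetilde{A_1}|$ on the left until the end: actually the fibre bound gave $|(A_1+b)\setminus\pi_m^{-1}(\widetilde A)|\ge|\widetilde{A_1+b}\setminus\widetilde A|$, and I want $|A_1|$ not $|\widetilde{A_1}|$ as the leading factor, so the honest route is to bound $|(A_1+b)\setminus\pi_m^{-1}(\widetilde A)|\ge \sum_{x\in\widetilde{A_1}+\pi_m(b)}\id[x\notin\widetilde A]\,|A_1^x|$ fibre-by-fibre (each fibre of $A_1$ has size $|A_1^{x-\pi_m(b)}|$ in $A_1+b$) and then $\E_b$ of this equals $\sum_{y\in\widetilde{A_1}}|A_1^y|\cdot\Pr_{t\in\widetilde B}[y+t\notin\widetilde A]\ge \sum_{y}|A_1^y|\max(0,1-|\widetilde A|/|\widetilde B|) = |A_1|\max(0,(|\widetilde B|-|\widetilde A|)/|\widetilde B|)$, where the per-$y$ bound $\Pr_{t\in\widetilde B}[y+t\in\widetilde A]\le|\widetilde A|/|\widetilde B|$ is the same averaging as above. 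I expect the main obstacle to be purely bookkeeping: reconciling the uniform measure on $B\setminus\{m\}$ with the uniform measure on $\widetilde B$ (i.e.\ fibres of $B$ can have different sizes), which is handled by noting that the per-residue bound $\Pr[y+t\in\widetilde A]\le|\widetilde A|/|\widetilde B|$ holds under the $B\setminus\{m\}$-pushforward measure too, since that measure still has support exactly $\widetilde B$ and the incidence sum $\sum_{t}\id[y+t\in\widetilde A]$ weighted by fibre sizes is at most (total weight)$\cdot$(density of $\widetilde A$) only after one checks the weights are balanced — in fact the cleanest fix is to replace ``$|\widetilde A|/|\widetilde B|$'' reasoning by the direct double-counting $\sum_{b\in B\setminus\{m\}}\id[y+\pi_m(b)\in\widetilde A]\le \frac{|B|-1}{|\widetilde B|}|\widetilde A|$, which fails without a uniformity hypothesis; so I will instead simply invoke the fact that each residue class of $B$ is nonempty and argue the bound at the level of residues, absorbing the discrepancy into the ``$\max(0,\cdot)$'', exactly as in Theorem \ref{technical_three_translates_Z}.
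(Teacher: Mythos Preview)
Your core argument is correct and is essentially the approach the paper has in mind (the lemma is quoted from \cite{BLT} without proof, but the paper says it follows ``along broadly similar lines'' to the fibre-counting in the sketch of Theorem~\ref{intro_three_translates_Z}). The clean version is exactly what you eventually reach: write
\[
\E_{b\in B\setminus\{m\}}\big|(A_1+b)\setminus\pi_m^{-1}(\widetilde A)\big|
=\sum_{a\in A_1}\Prob_{b\in B\setminus\{m\}}\big[\pi_m(a)+\pi_m(b)\notin\widetilde A\big],
\]
and bound each summand below by $\max\big(0,1-|\widetilde A|/|\widetilde B|\big)$.

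Where you go astray is in the last paragraph, manufacturing a difficulty that is not there. The hypotheses $\min(B)=0$ and $\max(B)=m$ force $B\subset[0,m]$, hence $B\setminus\{m\}\subset[0,m-1]$, and $\pi_m$ is \emph{injective} on $[0,m-1]$. Therefore $\pi_m$ restricts to a bijection $B\setminus\{m\}\to\widetilde B$ (note $\pi_m(m)=\pi_m(0)$, so no residue is lost), and the pushforward of the uniform measure on $B\setminus\{m\}$ is exactly the uniform measure on $\widetilde B$. There is no ``discrepancy'' to absorb, no weighting issue, and no need to appeal to Theorem~\ref{technical_three_translates_Z}; the per-$y$ bound $\Prob_{t\in\widetilde B}[y+t\in\widetilde A]\le|\widetilde A|/|\widetilde B|$ is immediate. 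With this observation your proof is two lines. Your first pass, bounding $|(A_1+b)\setminus\pi_m^{-1}(\widetilde A)|$ below by $|\widetilde{A_1}+\pi_m(b)\setminus\widetilde A|$, does indeed lose the fibre multiplicities and should be discarded in favour of the elementwise sum over $A_1$.
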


\vspace{7pt}
The next result follows easily from Lemma~\ref{lem8.2}.

\begin{lemma}\label{residue.vs2}
Let $A$ and $B$ be finite non-empty subsets of $\mathbb{Z}$, with $\min(B)=0$ and $\max(B)=m$. Suppose that $|B|-1=|\widetilde{B}| \geq 8|\widetilde{A}|$, where $\widetilde{A}=\pi_m(A)$ and $\widetilde{B}=\pi_m(B)$. Then
\[
\E_{b_2,b_3 \in B\setminus\{m\}} \bigg|A+\{0,b_2,b_3\}  \bigg|\geq 2.5|A|.
\]
\end{lemma}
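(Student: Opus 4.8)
The plan is to split $A+\{0,b_2,b_3\}=A\cup(A+b_2)\cup(A+b_3)$ according to which fibres $\bmod\ m$ are occupied by $A$. Write $\widetilde{A}=\pi_m(A)$ and, for $i\in\{2,3\}$, set $E_i=(A+b_i)\setminus\pi_m^{-1}(\widetilde{A})$. Since $A\subseteq\pi_m^{-1}(\widetilde{A})$, both $E_2$ and $E_3$ are disjoint from $A$, so
\[
\big|A+\{0,b_2,b_3\}\big| \;\geq\; |A| + |E_2\cup E_3| \;=\; |A| + |E_2| + |E_3| - |E_2\cap E_3|,
\]
and it suffices to prove $\E_{b_2,b_3}\big(|E_2|+|E_3|-|E_2\cap E_3|\big)\geq\tfrac32|A|$.

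The first two terms are exactly what Lemma~\ref{lem8.2} controls: applying it with $A_1=A$ and using $|\widetilde{A}|\leq|\widetilde{B}|/8$ gives $\E_{b_2}|E_2|\geq|A|\,\tfrac{|\widetilde{B}|-|\widetilde{A}|}{|\widetilde{B}|}\geq\tfrac78|A|$, and likewise for $E_3$. So everything comes down to showing that the overlap term is small, say $\E_{b_2,b_3}|E_2\cap E_3|\leq\tfrac18|A|$ (any bound below $\tfrac14|A|$ would suffice).

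For the overlap I would fix $b_2$ and average over $b_3$. Since $E_2$ is disjoint from $\pi_m^{-1}(\widetilde{A})$ we have $E_2\cap E_3=E_2\cap(A+b_3)$, and since $a\mapsto a+b_3$ is injective this has size $|\{a\in A:\,a+b_3\in E_2\}|$; hence $\E_{b_3}|E_2\cap E_3|=\sum_{a\in A}\Prob_{b_3}[a+b_3\in E_2]$. Now the key observation: every point of $E_2$ lies in a fibre belonging to $(\widetilde{A}+b_2)\setminus\widetilde{A}$, a set of size at most $|\widetilde{A}|$, so $a+b_3\in E_2$ forces $\pi_m(b_3)$ to lie in a fixed set of size at most $|\widetilde{A}|$. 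The hypothesis $|B|-1=|\widetilde{B}|$ says precisely that $\pi_m$ is injective on $B\setminus\{m\}$, so at most $|\widetilde{A}|$ of the $|B|-1$ choices of $b_3$ qualify, giving $\Prob_{b_3}[a+b_3\in E_2]\leq|\widetilde{A}|/(|B|-1)=|\widetilde{A}|/|\widetilde{B}|\leq\tfrac18$. Summing over $a\in A$ and averaging over $b_2$ yields $\E_{b_2,b_3}|E_2\cap E_3|\leq\tfrac18|A|$, and therefore $\E_{b_2,b_3}\big|A+\{0,b_2,b_3\}\big|\geq|A|+\tfrac78|A|+\tfrac78|A|-\tfrac18|A|=\tfrac{21}{8}|A|\geq2.5\,|A|$.

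The main obstacle is exactly this overlap estimate. The crude bound $|E_2\cap E_3|\leq|(A+b_2)\cap(A+b_3)|=|A\cap(A+(b_3-b_2))|$ combined with $\sum_d|A\cap(A+d)|=|A|^2$ only gives $\E|E_2\cap E_3|\leq|A|^2/(|B|-1)$, which is worthless since nothing bounds $|A|$ in terms of $|B|$. What rescues the argument is keeping the restriction to fibres outside $\widetilde{A}$: it confines the ``good'' translates $b_3$ to a subset of $B\setminus\{m\}$ of density at most $|\widetilde{A}|/|\widetilde{B}|\leq\tfrac18$, and for this confinement to translate into a bound on the \emph{number} of such $b_3$ one needs precisely that $\pi_m$ is injective on $B\setminus\{m\}$, i.e.\ the hypothesis $|B|-1=|\widetilde{B}|$.
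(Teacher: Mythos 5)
Your proof is correct, and it does what the paper intends: the paper does not reproduce a proof of this lemma (it is imported from \cite{BLT} with the remark that it ``follows easily from Lemma~\ref{lem8.2}''), and your argument is indeed essentially an application of Lemma~\ref{lem8.2} together with a direct pigeonhole estimate. Every step checks out: $A$, $E_2$, $E_3$ are correctly separated via the fibre decomposition; Lemma~\ref{lem8.2} with $A_1=A$ gives $\E_{b_i}|E_i|\geq\tfrac{7}{8}|A|$; and the overlap bound $\E_{b_3}|E_2\cap E_3|\leq\tfrac{1}{8}|A|$ for each fixed $b_2$ follows correctly from the facts that $\pi_m(E_2)\subseteq(\widetilde{A}+\widetilde{b}_2)\setminus\widetilde{A}$ has size at most $|\widetilde{A}|$ and that $\pi_m$ is injective on $B\setminus\{m\}$ (which is exactly the content of $|B|-1=|\widetilde{B}|$). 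The final count $\tfrac{21}{8}|A|\geq 2.5|A|$ is right.

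The one stylistic difference worth noting: rather than inclusion-exclusion plus a hand-written overlap estimate, one can obtain the same bound by applying Lemma~\ref{lem8.2} twice in a chained fashion. Writing $A+\{0,b_2,b_3\}\supseteq A\sqcup\big[(A+b_2)\setminus\pi_m^{-1}(\widetilde{A})\big]\sqcup\big[(A+b_3)\setminus\pi_m^{-1}\big(\widetilde{A}\cup(\widetilde{A}+\widetilde{b}_2)\big)\big]$ as a disjoint union, the first bracket has expected size $\geq\tfrac{7}{8}|A|$ by Lemma~\ref{lem8.2}, and the second has expected size $\geq|A|\cdot\frac{|\widetilde{B}|-2|\widetilde{A}|}{|\widetilde{B}|}\geq\tfrac{6}{8}|A|$ by Lemma~\ref{lem8.2} applied with the ``$A$'' of the lemma taken to be $A\cup(A+b_2)$; this again gives $\tfrac{21}{8}|A|$. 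Your pigeonhole argument for the overlap is precisely a specialisation of the proof of Lemma~\ref{lem8.2} itself, so the two routes are morally identical; the chained version is just a little more economical in that it never leaves the ambit of the cited lemma.
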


The last result from \cite{BLT} that we need is a strengthening of 
Theorem \ref{intro_three_translates_Z}.

\begin{theorem}\label{strengthening_of_three_translates}
Let $A$ and $B$ be finite non-empty subsets of $\mathbb{Z}$ with $|A| = |B|$, with $\min(B) = 0$ and $\max(B)=m$. Then
$$\E_{b\in B \setminus \{m\}}\bigg|A+\{0,b,m\}\bigg| \ge |A|+|B|-1+\max\bigg(0,\frac{(2|\pi_m(A)|-m)(
m-(|B|-1))-1}{|B|-1} \bigg).$$
\end{theorem}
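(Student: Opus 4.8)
The plan is to start from the exact formula for $|A\cup(A+m)|$ coming from \eqref{new-sum-eq}, and then control carefully how much a further random translate $A+b$ adds to it; the extra term in the conclusion will come from squeezing this ``added contribution'' harder in the regime where $A$ occupies more than half the residues modulo $m$.

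Set $C=A+\{0,m\}=A\cup(A+m)$, $\widetilde A=\pi_m(A)$, $\widetilde B=\pi_m(B)$, and let $p=m+1-|B|$ be the number of elements of $\{0,1,\dots,m\}$ missing from $B$. Since $0,m\in B$ and these are the only two points of $\{0,\dots,m\}$ identified by $\pi_m$, we have $|\widetilde B|=|B|-1$, $p=m-|\widetilde B|$, and $\pi_m$ restricts to a bijection $B\setminus\{m\}\to\widetilde B$. First I would record the refinement of \eqref{new-sum-eq}: on each fibre $x\in\widetilde A$, identified with $\mathbb Z$, $C^x=A^x\cup(A^x+1)$ has exactly $|A^x|+1+g_x$ elements, where $g_x\ge 0$ is the number of maximal blocks of consecutive integers missing strictly between $\min A^x$ and $\max A^x$; hence $|C|=|A|+|\widetilde A|+\sum_{x\in\widetilde A}g_x\ge |A|+|\widetilde A|$. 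Since $a\mapsto a+b$ is injective on $A$,
\[
\E_{b\in B\setminus\{m\}}\bigl|A+\{0,b,m\}\bigr|=|C|+\E_{b\in B\setminus\{m\}}\bigl|(A+b)\setminus C\bigr|\ \ge\ |A|+|\widetilde A|+\sum_{a\in A}\Prob_{b}\bigl[a+b\notin C\bigr],
\]
so everything reduces to a good lower bound on $\sum_{a\in A}\Prob_b[a+b\notin C]=\E_b|(A+b)\setminus C|$.

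Next I would split $(A+b)\setminus C$ into the part that lands in a fibre \emph{not} met by $A$ and the part that lands in a fibre met by $A$ but outside $C$ there:
\[
\bigl|(A+b)\setminus C\bigr|=\bigl|(A+b)\setminus\pi_m^{-1}(\widetilde A)\bigr|+\sum_{y\in\widetilde A}\bigl|(A+b)^y\setminus C^y\bigr|.
\]
For the first term, Lemma~\ref{lem8.2} with $A_1=A$ (using $|\widetilde B|=|B|-1$ and $|A|=|B|$) gives $\E_b|(A+b)\setminus\pi_m^{-1}(\widetilde A)|\ge |A|\max\bigl(0,\tfrac{|\widetilde B|-|\widetilde A|}{|\widetilde B|}\bigr)\ge |B|-1-|\widetilde A|$; the second term is non-negative and is discarded in the first instance. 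Combined with $|C|\ge|A|+|\widetilde A|$ this already reproves $\E_b|A+\{0,b,m\}|\ge |A|+|B|-1$ (which is also Theorem~\ref{intro_three_translates_Z}). The additional gain comes from a sharper treatment of these \emph{same} two terms when $2|\widetilde A|>m$: here, for every $b$, the translate $\widetilde A+\pi_m(b)$ must meet $\widetilde A$ in at least $2|\widetilde A|-m$ residues, so either $A+b$ spills into many fibres outside $\widetilde A$ (boosting the first term) or, where it does not, its points are crowded into the thin sets $C^y=A^y\cup(A^y+1)$ and must poke out of them (boosting the second term). Quantifying this trade-off — essentially bounding the bilinear ``covering deficiency'' $\sum_{z\in\widetilde A}|A^z|\,\bigl|\widetilde B\setminus(\widetilde A-z)\bigr|$ from below, where one must exploit that the weights $|A^z|$ sum to $|A|=|B|$ and that only $|\widetilde B|=|B|-1$ shifts are available while $2|\widetilde A|-m>0$ forces overlaps — should yield
\[
\E_b\bigl|(A+b)\setminus C\bigr|\ \ge\ |B|-1-|\widetilde A|+\frac{(2|\widetilde A|-m)\,p-1}{|\widetilde B|}
\]
whenever the fraction is non-negative. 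This estimate is the heart of the matter and the main obstacle; the lone $-1$ in it is the familiar off-by-one of $3k-4$-type statements (it corresponds to the single residue $0\in\widetilde B$, at which no escape ever occurs).

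Finally I would assemble. Adding $|A|+|\widetilde A|$ to the displayed bound and using $|A|=|B|$ gives $\E_b|A+\{0,b,m\}|\ge |A|+|B|-1+\tfrac{(2|\widetilde A|-m)p-1}{|\widetilde B|}$ whenever $(2|\widetilde A|-m)p\ge 1$; when $(2|\widetilde A|-m)p<1$ the plain bound $\E_b|A+\{0,b,m\}|\ge|A|+|B|-1$ from Theorem~\ref{intro_three_translates_Z} is already the assertion. In both cases
\[
\E_b\bigl|A+\{0,b,m\}\bigr|\ \ge\ |A|+|B|-1+\max\!\Bigl(0,\ \frac{(2|\widetilde A|-m)\,p-1}{|\widetilde B|}\Bigr),
\]
and substituting $p=m-(|B|-1)$ and $|\widetilde B|=|B|-1$ is exactly the claimed inequality.
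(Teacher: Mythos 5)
The paper does not contain a proof of Theorem~\ref{strengthening_of_three_translates}: it is stated as a black box imported from the companion paper \cite{BLT}, so there is no in-paper argument to compare you against. Your framework for attacking it directly is sensible and matches the spirit of the proofs the paper does sketch (fix $0$ and $m$, pass to the quotient $\mathbb{Z}_m$, use \eqref{new-sum-eq}, randomize the third translate, invoke Lemma~\ref{lem8.2}). Your bookkeeping of $|\widetilde B|=|B|-1$, $p=m-|\widetilde B|$, the identity $|C|=|A|+|\widetilde A|+\sum_x g_x$, and the fibre decomposition $|(A+b)\setminus C|=|(A+b)\setminus\pi_m^{-1}(\widetilde A)|+\sum_{y\in\widetilde A}|(A+b)^y\setminus C^y|$ are all correct, and you correctly reduce the theorem to the single estimate
\[
\E_{b\in B\setminus\{m\}}\bigl|(A+b)\setminus C\bigr|\ \ge\ |B|-1-|\widetilde A|+\frac{(2|\widetilde A|-m)\,p-1}{|\widetilde B|}.
\]

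However, that estimate is never established. You write that quantifying the trade-off between the out-of-fibre escape term and the in-fibre crowding term ``should yield'' the display and explicitly label it ``the heart of the matter and the main obstacle'' --- which is exactly what it is, and precisely the part you have not done. The point is not cosmetic: in the regime $2|\widetilde A|>m$ where the extra term is positive, the out-of-fibre term you control via Lemma~\ref{lem8.2} (i.e.\ the bilinear sum $\frac{1}{|\widetilde B|}\sum_{z\in\widetilde A}|A^z|\,|\widetilde B\setminus(\widetilde A-z)|$) can vanish entirely (e.g.\ when $\widetilde A=\mathbb{Z}_m$), so the gain must come from the in-fibre term $\sum_y|(A+b)^y\setminus C^y|$, about whose size you say nothing quantitative. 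Turning your heuristic --- ``overlapping fibres force points into the thin sets $C^y=A^y\cup(A^y+1)$ and they must poke out'' --- into the stated inequality requires an argument that compares $(A+b)^y=A^{y-\pi_m(b)}+b$ against $A^y\cup(A^y+m)$ across all fibres and all $b$, and then aggregates; nothing in the write-up does this. Until that is supplied, the proof reproves the base bound $|A|+|B|-1$ of Theorem~\ref{intro_three_translates_Z} but not the strengthening.
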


\section{The Main Result}

In this section we prove Theorem~\ref{linear_stability_four_translates}, our stability version of Theorem~\ref{intro_three_translates_Z}. As we mentioned earlier,
the dependence of the error terms on $\varepsilon$, being linear, is of
best possible order. It will turn out that a key ingredient is actually the following weaker
version of Theorem \ref{linear_stability_four_translates}.

\vspace{5pt}
\begin{theorem}\label{weak_linear_stability_four_translates}
For any $\delta>0$ there exists $\varepsilon>0$ such that the following holds. Let $A$ and $B$ be finite non-empty subsets of $\mathbb{Z}$ with $n=|A| = |B|$. Suppose that
\begin{equation}\label{eq13.0}
    \max_{b_1,b_2,b_3,b_4 \in B}\bigg|A+\{b_1,b_2,b_3,b_4\}\bigg|\leq 2n-1 + \varepsilon n.
\end{equation}
Then there exist arithmetic progressions $P$ and $Q$ in $\mathbb{Z}$ with the same common difference and size $\lfloor(1+\delta)n\rfloor$ such that $|A \setminus P| \leq \delta n \text{ and } B \subset Q.$
\end{theorem}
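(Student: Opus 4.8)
The plan is as follows. By translating we may assume $\min B=0$ and $\max B=m$, so $m\ge n-1$; write $\widetilde A=\pi_m(A)$, $\widetilde B=\pi_m(B)$, and note that $\pi_m$ restricts to a bijection from $B\setminus\{m\}$ onto $\widetilde B$, so $|\widetilde B|=n-1$. Everything is driven by the four-term sumsets $A+\{0,m,b_2,b_3\}$ with $b_2,b_3$ drawn independently and uniformly from $B\setminus\{m\}$: padding $\{0,m,b_2,b_3\}$ to a four-element multiset, \eqref{eq13.0} gives $\E_{b_2,b_3}\bigl|A+\{0,m,b_2,b_3\}\bigr|\le 2n-1+\varepsilon n$, and likewise $|A+\{0,b,m\}|\le 2n-1+\varepsilon n$ for every $b\in B$.

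\emph{Step 1: $A$ has an almost full projection mod $m$.} Fixing $b_2$ and applying Theorem~\ref{technical_three_translates_Z} gives $\E_{b_2}|A+\{0,b_2,m\}|\ge n+|\widetilde A|+n\max\!\bigl(0,(|\widetilde B|-|\widetilde A|)/|\widetilde B|\bigr)$. For the second random translate, if $a\in A$ lies in the fibre $x=\pi_m(a)$ then the whole fibre $x+\widetilde b_3$ of $A+b_3$ avoids $A+\{0,b_2,m\}$ as soon as $x+\widetilde b_3\notin\widetilde A$ and $x+\widetilde b_3\notin\widetilde A+\widetilde b_2$; each of these fails with probability at most $|\widetilde A|/|\widetilde B|$ (using the bijection above), and conditioning on $b_3$ shows that in expectation $A+b_3$ contributes at least $n\bigl(1-|\widetilde A|/|\widetilde B|\bigr)^2$ points outside $A+\{0,b_2,m\}$. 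Writing $|\widetilde A|=(1-t)|\widetilde B|$ (legitimate when $|\widetilde A|\le|\widetilde B|$, the other case being immediate) we get
\[
\E_{b_2,b_3}\bigl|A+\{0,m,b_2,b_3\}\bigr|\ \ge\ n+|\widetilde A|+nt+nt^2\ =\ 2n-1+t+nt^2,
\]
so \eqref{eq13.0} forces $t\le\sqrt\varepsilon$, i.e. $|\widetilde A|\ge(1-\sqrt\varepsilon)(n-1)$. This is exactly the interaction of the two random points: it is the \emph{product} $\bigl(1-|\widetilde A|/|\widetilde B|\bigr)^2$, not a single factor, that yields the quadratic gain $nt^2$.

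\emph{Step 2: $B$ lies in a short progression.} Subtracting $|A\cup(A+m)|\ge n+|\widetilde A|$ from $|A+\{0,b,m\}|\le 2n-1+\varepsilon n$ and using Step~1 shows that for every $b\in B$ the set $A+b$ meets the complement of $S:=A\cup(A+m)$ in at most $O(\sqrt\varepsilon n)$ points; summing over $b\in B$, at most $O(\sqrt\varepsilon n^2)$ of the representations in $A+B$ fall outside $S$, a set of size at most $(2+\varepsilon)n$ which contains $A+\{0,m\}$. This concentration is the structural heart of the argument. First, Theorem~\ref{strengthening_of_three_translates} together with $\E_b|A+\{0,b,m\}|\le 2n-1+\varepsilon n$ gives $(2|\widetilde A|-m)(m-n+1)\le\varepsilon n^2+1$, which with $|\widetilde A|\ge(1-\sqrt\varepsilon)(n-1)$ already rules out $(1+\delta)n\le m\le(2-\delta)n$ (the first factor is then a positive multiple of $n$ and the second at least $\delta n$). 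To exclude $m>(2-\delta)n$ we use the concentration directly: passing to a typical $a\in A$, the set $B$ lies up to $O(\sqrt\varepsilon n)$ points inside $(S-a)\cap[0,m]$, and $S-a=(A-a)\cup(A+m-a)$ is the union of a set near $0$ of diameter $\operatorname{diam}A$ and its translate by $m$; when $m$ exceeds twice $\operatorname{diam}A$ this splits $B$ into a clump near $0$ and a clump near $m$ with essentially nothing in between, and then taking $0$, $m$, and one element from each clump so that the four translates of $A$ are pairwise disjoint in $\mathbb{Z}$ produces a four-term sumset of size close to $4n$, contradicting \eqref{eq13.0}; the residual possibility $m\le 2\operatorname{diam}A$ forces $\operatorname{diam}A$ to be of order $n$, and a similar but more careful use of the concentration (now controlling $\operatorname{diam}A$ and $\operatorname{diam}B$ simultaneously) again yields a widely spaced quadruple. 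Hence $m<(1+\delta)n$, and $Q=\{0,1,\dots,m\}$ contains $B$ and has the required size.

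\emph{Step 3, and the main obstacle.} With $m$ now small, the same concentration forces $A$ to be close to an interval: $|A\cup(A+m)|\le 2n-1+\varepsilon n$ makes almost every fibre of $A$ mod $m$ a single block, the fact that every translate $A+b$ ($b\in B$, which is now dense in $[0,m]$) lies almost inside $A\cup(A+m)$ pins the offsets of these blocks to be essentially constant, and $|\widetilde A|\ge(1-\sqrt\varepsilon)(n-1)$ differs from $m$ by $O(\delta n)$; so, after a translation, $A$ lies up to $O(\delta n)$ points inside an interval $P$ of length at most $(1+\delta)n$, which has the same common difference $1$ as $Q$. I expect the hard part to be Step~2, and within it the exclusion of large $m$: none of the three-translate lemmas quoted above is tight once $\widetilde A$ is already large (they were built for the regime where $A+B$ itself is small), so one genuinely has to pass to the ``concentration on $S$'' picture, extract the two-clump structure of $B$, and build a widely separated quadruple of summands — and making those four translates of $A$ disjoint \emph{in $\mathbb{Z}$}, rather than merely mod $m$, forces control of the diameters of $A$ and $B$, which is where most of the work and casework lies.
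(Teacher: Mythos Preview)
Your Step~1 is correct and, in fact, cleaner than the paper's argument for Lemma~\ref{1-16}: the paper partitions $A$ according to whether a fibre is a singleton and runs two separate Markov arguments to locate a good $b_1$ before bringing in a second random translate, whereas your direct computation of
\[
\Prob_{b_2,b_3}\bigl(x+\widetilde b_3\notin\widetilde A\ \text{and}\ x+\widetilde b_3-\widetilde b_2\notin\widetilde A\bigr)\ \ge\ \bigl(1-|\widetilde A|/|\widetilde B|\bigr)^2
\]
(valid because $b_2,b_3$ are independent, so one conditions on $b_3$ and averages the inner probability over $b_2$) gives the quadratic gain $nt^2$ in one stroke and even yields the sharper exponent $|\widetilde A|\ge(1-\sqrt\varepsilon)(n-1)$ in place of the paper's $1-4\varepsilon^{1/4}$.

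The genuine gap is in Step~2. You are trying to prove $m<(1+\delta)n$, i.e.\ that $Q$ may be taken to be an \emph{interval}. This is false. Take $A=B=\{0,2,4,\dots,2(n-1)\}$: then $m=2(n-1)$, every four-term sumset has size at most $|A+B|=2n-1$, so the hypothesis holds with $\varepsilon=0$, yet $m\approx 2n$ lands squarely in your Case~2. There is no ``widely spaced quadruple'' to find here, because every $|A+\{b_1,b_2,b_3,b_4\}|\le 2n-1$; and $B$ does not split into two clumps. The correct $Q$ is the arithmetic progression of even numbers, with common difference $2$. More generally the common difference of $Q$ is not known in advance and need not be $1$, so any argument whose target is a bound on $m$ (rather than on the length of the shortest AP containing $B$) cannot succeed; nor can one reduce to $\gcd(B)=1$ beforehand without also decomposing $A$ across residue classes, which destroys the equal-size hypothesis.

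The paper's route for this step is genuinely different. From Step~1 it deduces that every $\widetilde c\in\widetilde B+\widetilde B$ nearly stabilises $\widetilde A$ (there is $\widetilde A'\subset\widetilde A$ of size $\ge(1-O(\varepsilon^{1/4}))n$ with $\widetilde A'+\widetilde c\subset\widetilde A$); this forces the additive energy $E(\widetilde A)$ to be at least $(1-O(\varepsilon^{1/4}))n^2\,|\widetilde B+\widetilde B|$, and comparing with the trivial bound $E(\widetilde A)\le n^3$ gives $|\widetilde B+\widetilde B|\le(1+O(\varepsilon^{1/4}))|\widetilde B|$. Kneser's theorem in $\mathbb{Z}_m$ then says $\widetilde B+\widetilde B$ is a single coset of a subgroup $\widetilde H$, and since $0\in\widetilde B$ one gets $\widetilde B\subset\widetilde H$; the preimage of $\widetilde H$ in $[0,m]$ is the required arithmetic progression $Q$, whose common difference is $m/|\widetilde H|$ and is detected by the argument rather than assumed. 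Your concentration observation $|(A+b)\setminus S|=O(\sqrt\varepsilon\,n)$ is correct and is morally the same ``$\widetilde b$ nearly stabilises $\widetilde A$'' statement, but you then need to pass through doubling of $\widetilde B$ and Kneser (or an equivalent device) rather than attempt to bound $m$ directly.
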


\vspace{5pt}
We will also need the following technical result.
\begin{theorem}\label{stab_technical}
  For all sufficiently small $\alpha,\beta, \varepsilon>0$, taking
  $\nu=(\varepsilon+\varepsilon^2(1-\alpha)^{-1})(1-4\alpha-\beta)^{-1}$
  and $\mu=2^{15}(\varepsilon+\beta)$ the following holds. Let $A$ and $B$ be finite non-empty subsets of $\mathbb{Z}$ with $n=|A|=|B|$. Suppose that there exist integer arithmetic progressions $P$ and $Q$ with the same common difference $d$ and sizes $\lfloor (1+\alpha)n \rfloor$ and $\lfloor (1+\beta)n \rfloor$ such that
\begin{equation}\label{eq18.-2}
    |A\setminus P|\leq \alpha n \text{ and } B \subset Q
\end{equation}
and
\begin{equation}\label{eq18.-1}
    \max_{b_1,b_2,b_3,b_4 \in B}\bigg|A+\{b_1,b_2,b_3,b_4\}\bigg|\leq 2n-1 + \varepsilon n.
\end{equation}
Then there exist integer arithmetic progressions $P'$ and $Q'$ with common difference $d$ and sizes $\lfloor (1+\mu)n \rfloor$ and $\lfloor (1+\nu)n \rfloor$ such that $ |A\setminus P'|\leq \mu n \text{ and } B \subset Q'.$
\end{theorem}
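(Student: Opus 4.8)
The plan is to first pin down $m:=\max B$ (which produces $Q'$) and then, using that $B$ is thereby forced into a short interval, to pin down $A$ (which produces $P'$); the second step is by far the harder. We may assume the common difference is $d=1$: since $0,m\in B\subseteq Q$ we have $d\mid m$, so projecting onto and rescaling the residue class of $P$ modulo $d$ reduces the general case to this one, at the cost only of the $\le\alpha n$ points of $A$ off that class. Translate so that $\min B=0$. Then $B\subseteq[0,m]$ with $n-1\le m\le(1+\beta)n-1$ by \eqref{eq18.-2}, and $|A\cap P|\ge(1-\alpha)n$. Since $P$ is an interval of length $\le(1+\alpha)n<2m$, each fibre modulo $m$ meets $P$ in at most two consecutive points, so projecting $A\cap P$ modulo $m$ destroys at most $\max(0,(1+\alpha)n-m)\le\alpha n+1$ points, whence
\[
|\pi_m(A)|\ \ge\ \min\bigl((1-\alpha)n,\ m-2\alpha n\bigr)\ \ge\ (1-2\alpha)n-1 ,
\]
and in particular $2|\pi_m(A)|-m\ge(1-4\alpha-\beta)n-1>0$, using $1-4\alpha-\beta>0$.

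Next I would control $m$. Since $A+\{0,m,b_3,b_4\}\supseteq A+\{0,b_3,m\}$, hypothesis \eqref{eq18.-1} gives $\E_{b\in B\setminus\{m\}}|A+\{0,b,m\}|\le 2n-1+\varepsilon n$. Comparing this with the lower bound of Theorem~\ref{strengthening_of_three_translates} and inserting the estimate on $|\pi_m(A)|$ just obtained forces the ``$\max$'' term there to be small:
\[
\bigl(2|\pi_m(A)|-m\bigr)\,(m-n+1)\ \le\ \varepsilon n(n-1)+1 .
\]
As $2|\pi_m(A)|-m$ is of order $(1-4\alpha-\beta)n$, solving this for $m-n+1$ (treating separately the ranges $m\le(1+\alpha)n$ and $m>(1+\alpha)n$ for the bound on $|\pi_m(A)|$, and carrying the lower-order terms, which is exactly where the $\varepsilon^2(1-\alpha)^{-1}$ correction enters) yields $m-n+1\le\nu n$. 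Enlarging $[0,m]$ to an interval of length $\lfloor(1+\nu)n\rfloor$ gives $Q'\supseteq B$.

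For $P'$, the heart of the proof, I would exploit that $B$ now nearly fills the short interval $[0,m]$, so that a uniform $b\in B\setminus\{m\}$ behaves like a near-uniform shift in $[0,m]$. Split $A$, relative to the fibres modulo $m$, into a \emph{core} $A_c$ (at most one point, or two consecutive points, per fibre -- those in $P$) and a set $A_f$ of \emph{far} points, and prove two estimates by lower-bounding $\E_{b_3,b_4\in B}\,|A+\{0,m,b_3,b_4\}|$ and comparing with \eqref{eq18.-1}. First, $|A_f|\le\tfrac12\mu n$: far points are essentially independent, so for generic $(b_3,b_4)$ each contributes four genuinely new elements to the sumset (two in its own fibre, one in each shifted fibre), and more than about $\tfrac12\varepsilon n$ of them would push the expected sumset above $(2+\varepsilon)n-1$. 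Second, the core $A_c$ -- essentially a section $x\mapsto c_x$ -- has ``level spread'' at most $\tfrac12\mu$: were the $c_x$ to range over a longer interval, then for a random pair $(b_3,b_4)$ the translated copies $A+b_3$ and $A+b_4$ would, in many fibres at once, sit at levels distinct both from $\{c_x,c_x+1\}$ and from each other, again inflating the expected sumset past $(2+\varepsilon)n-1$. Here one genuinely needs \emph{two} random translates: a single one cannot detect a spread-out core, as the Introduction's example $B=[0,n]$, $A=X\cup(X+n)$ shows. Combining the two estimates, $A$ lies, outside at most $\mu n$ points, inside an interval of length at most $(1+\mu)n$, which (enlarged if necessary) is the required $P'$.

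The main obstacle is the second estimate: turning ``two random translates see the spread of the core'' into a workable inequality requires classifying the fibres by the local run-structure of $A$ and of its random shifts, and carefully accounting for the expected number of new elements while keeping track of how the two random shifts interact with each other and with the two fixed shifts $0$ and $m$. This is what occupies essentially the whole length of the argument and produces the constant $2^{15}$.
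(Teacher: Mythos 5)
Your high-level strategy for $Q'$ matches the paper's: deduce from the hypothesis that $\E_{b}|A+\{0,b,m\}|$ is small, invoke Theorem~\ref{strengthening_of_three_translates}, and use the resulting bound on $(2|\pi_m(A)|-m)(m-n+1)$ to pin down $m$. For $P'$, you also correctly identify that the heart of the matter is exploiting \emph{two} random translates and their interaction. But there are two genuine gaps.

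First, your reduction to $d=1$ simply discards the $\le\alpha n$ points of $A$ outside the residue class $d\mathbb{Z}$, but $\alpha$ need not be comparable to $\varepsilon+\beta$, so this immediately ruins the target bound $|A\setminus P'|\le\mu n$ with $\mu=2^{15}(\varepsilon+\beta)$. The paper instead \emph{proves} that the off-residue part $A_2=A\setminus d\mathbb{Z}$ is small of the right order: writing $n_2=|A_2|$, it splits $\E_{b_2,b_3}|A+\{0,b_2,b_3,md\}|$ into the $A_1$ and $A_2$ contributions, applies Lemma~\ref{residue.vs2} to get $\E|A_2+\{0,b_2,b_3\}|\geq 2.5n_2$, and thereby deduces $n_2\leq 2\varepsilon n$ (eq.~\eqref{eq18.25}) rather than just $n_2\leq\alpha n$. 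Without this, neither the bound on $P'$ nor the needed restricted hypothesis $\max_{b_2,b_3\in B_1}|A_1+\{0,b_2,b_3,md\}|\le 2n_1-1+8\varepsilon n_1$ (eq.~\eqref{eq18.27}) comes out with the correct $\varepsilon$-scale.

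Second, and more seriously, the $P'$ argument you sketch -- a core/far decomposition by fibres plus a ``level-spread'' estimate for the core -- is not worked out, and you explicitly say the key inequality is left unresolved. The paper's actual mechanism is quite different and is worth contrasting with. After translating so that $A_1$ splits as $A_l\sqcup A_c\sqcup A_r$ with $A_c=[-m,0)\cap A_1$ and the two tails of size roughly $k$, the paper applies Theorem~\ref{technical_three_translates_Z} to $A_c$ to produce a $b_2$ and a set $Y^c\subset[-m,m)$ of size close to $2n_1$ that is already covered by $A_c\cup(A_c+b_2)\cup(A_c+m)$; the complementary ``holes'' $Y$ have size about $2k$. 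The second random translate $b_3$ is then used to hit these holes: a case analysis (depending on whether $A_1$ is concentrated near the boundaries $[-m-k,-m)$ and $[0,k)$ or spread further out) shows that some $b_3$ makes $|(A_1+b_3)\cap Z|\geq k/32$ for a suitable $Z$ made of holes, and adding these new elements to the disjoint pieces $A_l$, $A_r+m$, and $Y^c\cap(A_c\cup(A_c+b_2)\cup(A_c+m))$ contradicts \eqref{eq18.27} unless $k\le 2^9\varepsilon n_1$. Your sketch does not anticipate the covered-set/holes construction, the role of the fixed translate $b_2$ (chosen \emph{before} the random $b_3$), or the two-case split on boundary concentration -- and these are precisely where the proof's content lies and where the constant $2^{15}$ is earned.
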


\begin{proof}
  Fix $\alpha,\beta, \varepsilon>0$ sufficiently small. By Theorem~\ref{equality_four_translates}, we may assume
  that
\begin{equation}\label{eq18.0}
    \varepsilon n \geq 1.
\end{equation}
Set $R= d\mathbb{Z}$,
and assume without loss of generality that
$P, Q\subset R$.
Form the partition
$ A= A_1\sqcup A_2$
given by
$ A_1= A\cap R \text{ and } A_2=A\cap R^c$
and write
$n_1=|A_1| \text{ and } n_2=|A_2|.$
By \eqref{eq18.-2} we have
\begin{equation}\label{eq18.1}
    n\geq n_1\geq (1-\alpha)n \geq 9n_2.
\end{equation}
Assume that
$\min(B)=0 \text{ and }\max(B)=md.$
Take any subset
$ B_1\subset B$
of size
$ n_1=|B_1|$
such that
$\min(B_1)=0 \text{ and }\max(B_1)=md.$
On the one hand, by construction we have
$(A_1+B_1)\cap (A_2+B_1)=\emptyset.$
On the other hand, by \eqref{eq18.-1} we have
$$ {\mathbb E}_{b_2,b_3\in B_1\setminus \{md\}}\bigg|A+\{0,b_2,b_3,md\}\bigg|\leq 2n-1 + \varepsilon n.$$
The last two relations imply
\begin{equation*}
    {\mathbb E}_{b_2\in B_1\setminus \{md\}}\bigg|A_1+\{0,b_2,md\}\bigg| + {\mathbb E}_{b_2,b_3\in B_1\setminus \{md\}}\bigg|A_2+\{0,b_2,b_3\}\bigg|\leq 2n-1 + \varepsilon n.
\end{equation*}
By Theorem~\ref{technical_three_translates_Z}, we have
$$ {\mathbb E}_{b_2\in B_1\setminus \{md\}}\bigg|A_1+\{0,b_2,md\}\bigg| \geq 2n_1-1.$$
Lemma~\ref{residue.vs2} and \eqref{eq18.1} tell us that
\begin{equation*}
    {\mathbb E}_{b_2,b_3\in B_1\setminus \{md\}}\bigg|A_2+\{0,b_2,b_3\}\bigg|\geq 2.5n_2.
\end{equation*}
The last three inequalities imply
\begin{equation}\label{eq18.2}
    {\mathbb E}_{b_2\in B_1\setminus \{md\}}\bigg|A_1+\{0,b_2,md\}\bigg| \leq 2n_1-1 + \varepsilon n_1.
\end{equation}
and also
\begin{equation}\label{eq18.25}
    n_2\leq 2\varepsilon n \text{ i.e. } n_1\geq (1-2\varepsilon)n.
\end{equation}
By \eqref{eq18.-1} and \eqref{eq18.25} we have
\begin{equation}\label{eq18.27}
    \max_{b_2,b_3 \in B_1\setminus\{md\}}\bigg|A_1+\{0,b_2,b_3,md\}  \bigg| \leq 2n_1-1+8\varepsilon n_1.
\end{equation}
For the rest of the proof we focus on the sets $A_1$ and $B_1$ and thus we may assume without loss of
generality that $d=1.$
\\

\textbf{The construction of $Q'$.}
By Theorem~\ref{strengthening_of_three_translates} we have
\begin{equation}\label{eq18.3}
    {\mathbb E}_{b_2\in B_1\setminus \{md\}}\bigg|A_1+\{0,b_2,m\}\bigg|\geq 2n_1-1+\frac{(2|\pi_m(A_1)|-m)(m-(n_1-1))-1}{n_1-1}.
\end{equation}
By \eqref{eq18.-2}, we have
$|\pi_m(A_1)| \geq n-2\alpha n $
and also
$ m\leq (1+\beta)n;$
consequently,
\begin{equation}\label{eq18.4}
    2|\pi_m(A_1)|-m \geq (1-4\alpha-\beta)n \geq (1-4\alpha-\beta)n_1.
\end{equation}
From \eqref{eq18.2}, \eqref{eq18.3} and \eqref{eq18.4} we have
\begin{equation}\label{eq18.4.5}
    \frac{(1-4\alpha-\beta)n_1(m+1-n_1)-1}{n_1-1}\leq \varepsilon n_1,
\end{equation}
and so our bounds on the parameters imply
\begin{eqnarray*}
        m+1&\leq& n_1+ (1-4\alpha-\beta)^{-1}(\varepsilon (n_1-1)+\frac{1}{n_1})
        \leq n+(1-4\alpha-\beta)^{-1}(\varepsilon n+\frac{(1-\alpha)^{-1}}{n})\\
        &\leq& n+(1-4\alpha-\beta)^{-1}(\varepsilon n+\varepsilon^2(1-\alpha)^{-1}n)\\
        =(1+(1-4\alpha-\beta)^{-1}(\varepsilon+\varepsilon^2(1-\alpha)^{-1}))n.
\end{eqnarray*}
Finally, we conclude the arithmetic progression $Q'=\{0,d,\hdots, md\}$ has common difference $d$ and size at most $\lfloor (1+\nu)n \rfloor$ and contains $B$.\\

\textbf{The construction of $P'$.}
By translating $A_1$, we may assume that the partition
$A_1=A_l\sqcup A_c \sqcup A_r$
given by
$$A_l=(-\infty, -m) \cap A_1 \text{ , } A_c=[-m,0)\cap A_1 \text{ , } A_r=[0, \infty)\cap A_1.$$
satisfies
\begin{equation}\label{eq18.45}
    |A_r|=k \text{ , } |A_l|=k-\tau \text{ , } |A_c|=n_1-2k+\tau,
\end{equation}
where
$ \tau \in \{0,1\}.$
By \eqref{eq18.-2} and \eqref{eq18.1} we have $k \leq 2^{-10}n_1$, and
by \eqref{eq18.0} and \eqref{eq18.1} we have $2\varepsilon n_1 \geq 1$.
However, we may assume that
\begin{equation}\label{eq18.5}
    2^{-10}n_1\geq k \geq 2^{10}(\beta+\varepsilon) n_1 \geq 2^9,
\end{equation}
and so Theorem~\ref{technical_three_translates_Z} implies
$${\mathbb E}_{b_2\in B_1 \setminus \{m\}}\bigg|A_c +\{0,b_2,m\}\bigg| \ge 2(n_1-2k+\tau) + (n_1-2k+\tau) \frac{2k-\tau-1}{n_1-1}.$$
In particular, there exist $b_2 \in B_1\setminus\{m\}$ and there exists a partition
$ [-m,m)=Y\sqcup Y^c  $
such that
\[
        |Y|=2m-[2(n_1-2k+\tau)+(n_1-2k+\tau)\frac{2k-\tau-1}{n_1-1}] \geq 2k-\tau-1,
\]
\begin{eqnarray}\label{eq18.8}
|Y^c|&=& 2(n_1-2k+\tau)+(n_1-2k+\tau)\frac{2k-\tau-1}{n_1-1}\\
&\geq& 2n_1-1 -2k+\tau -\frac{(2k-\tau-1)^2}{n_1-1},
\end{eqnarray}
and
\[
        Y^c \subset A_c \cup (A_c+m) \cup (A_c+b_2).
\]

\vspace{5pt}
We now break the argument into two cases.

\vspace{5pt}
{\bf Case 1:}
\begin{equation}\label{eq18.9}
    \bigg| [-m-k,-m)\cap A_1\bigg|\geq k/2 \text{ and }  \bigg| [0,k)\cap A_1\bigg|\geq k/2.
\end{equation}
In this case, by \eqref{eq18.8} we get
$$\bigg|[-m,0)\cap Y  \bigg| \geq k-1 \text{ or } \bigg|[0,m)\cap Y  \bigg|\geq k-1.$$
Let
$Z=Y\cap [-m,0).$
We may assume that
$|Z|\geq k-1.$
By \eqref{eq18.5} we have
\begin{equation}\label{eq18.10}
    |Z|\geq k/2.
\end{equation}

\vspace{5pt}
{\bf Claim A.}{\em
$$ {\mathbb E}_{b_3 \in [0,4k)\cap B_1}\bigg|(A_1+b_3)\cap Z\bigg| \geq k/32.$$
In particular, there exists $b_3 \in B_1$ such that $$\bigg|(A_1+b_3)\cap Z\bigg| \geq k/32.$$
}
\begin{proof}
By \eqref{eq18.-2}, \eqref{eq18.25} and \eqref{eq18.5} we have
\begin{equation}\label{eq18.11}
        \bigg|[0,4k) \setminus B_1  \bigg| \leq \bigg|[0,m) \setminus B  \bigg| +|B_2|
        \leq \beta n +n_2
        \leq 4(\beta+\varepsilon) n_1
        \leq k/4.
\end{equation}
For $x\in [-m+3k,0)$, by \eqref{eq18.-2}, \eqref{eq18.25}, \eqref{eq18.45} and \eqref{eq18.5} we have
\begin{eqnarray}\label{eq18.13}
        \bigg|(x-4k,x]\setminus A_1 \bigg| &\leq& \bigg|(x-3k,x]\setminus A_1 \bigg|+k
        \leq \bigg|[-m,0)\setminus A_1 \bigg|+k \\
        &\leq& m-(n_1-2k+\tau)+k
        \leq \beta n +n_2 + 3k \\
        &\leq& 4(\beta+\varepsilon) n_1 + 3k
        \leq  7k/2.
\end{eqnarray}
For $x\in [-m,-m+3k)$, by \eqref{eq18.9} we get
\begin{equation}\label{eq18.12}
\bigg|(x-4k,x]\setminus A_1 \bigg| \leq 7k/2.
\end{equation}
For $z\in [-m,0)$, by \eqref{eq18.11}, \eqref{eq18.13}, \eqref{eq18.12} we get
\begin{equation}\label{eq18.14}
    {\mathbb P}_{b_3\in [0,4k)\cap B_1}\bigg(z \in A_1+b_3\bigg)\geq 1/16.
\end{equation}
Therefore, by \eqref{eq18.10} and \eqref{eq18.14} we conclude
$${\mathbb E}_{b_3 \in [0,4k)\cap B_1}\bigg|(A_1+b_3)\cap Z\bigg|= \sum_{z\in Z}{\mathbb P}_{b_3\in [0,4k)\cap B_1}\bigg(z \in A_1+b_3\bigg) \geq k/32. $$

\vspace{5pt}
This concludes the proof of Claim A.
\end{proof}

\vspace{5pt}
We now turn to the second case.

\vspace{5pt}
{\bf Case 2:}
\begin{equation*}
    \bigg| [-m-k,-m)\cap A_1\bigg|\leq k/2 \text{ or } \bigg| [0,k)\cap A_1\bigg|\leq k/2.
\end{equation*}
In this case we may assume
\begin{equation}\label{eq18.15}
    \bigg| [-m-k,-m)\cap A_1\bigg|\leq k/2 \text{ and } \bigg| [-\infty,-m-k)\cap A_1\bigg|\geq k/4.
\end{equation}
Put
$ Z=(-\infty,-m)\setminus A_1.$

\vspace{5pt}
{\bf Claim B.}{\em
$$ {\mathbb E}_{b_3 \in [0,k)\cap B_1}\bigg|(A_1+b_3)\cap Z\bigg| \geq k/32.$$
In particular, there exists $b_3 \in B_1$ such that $\bigg|(A_1+b_3)\cap Z\bigg| \geq k/32.$
}
\begin{proof}
Let $S$ be set of the greatest $k/4$ elements of $[-\infty,-m-k)\cap A_1$.\\

\vspace{5pt}
For $x\in S$, by \eqref{eq18.15} we have
\begin{equation}\label{eq18.16}
    \bigg|[x,x+k)\cap Z \bigg| \geq k/4.
\end{equation}
By \eqref{eq18.-2}, \eqref{eq18.25} and \eqref{eq18.5} we get
\begin{equation}\label{eq18.17}
        \bigg|[0,k) \setminus B_1  \bigg| \geq k-\bigg|[0,m) \setminus B  \bigg| -|B_2|
        \geq k-\beta n -n_2
        \geq k-4(\beta+\varepsilon) n_1
        \geq 7k/8.
\end{equation}
For $x\in S$, by \eqref{eq18.16} and \eqref{eq18.17} we have ${\mathbb P}_{b_3\in [0,k)\cap B_1}\bigg(x+b_3\in Z\bigg)\geq 1/8$,
so
$${\mathbb E}_{b_3 \in [0,k)\cap B_1}\bigg|(A_1+b_3)\cap Z\bigg|= \sum_{x\in S}{\mathbb P}_{b_3\in [0,k)\cap B_1}\bigg(x+b_3 \in Z\bigg) \geq k/32. $$

\vspace{5pt}
This ends the proof of Claim B.
\end{proof}

\vspace{5pt}
We now return to the proof of Theorem~\ref{stab_technical}.
By construction, we have
$$ A_1+\{0,b_2,b_3,m\} \supset$$
$$A_l \sqcup (A_r+m) \sqcup \bigg( [A_c \cup (A_c+b_2) \cup (A_c+m)]\cap Y^c \bigg) \sqcup \bigg((A_1+b_3)\cap Z \bigg).$$
So by Claim A and Claim B, together with  \eqref{eq18.27}, \eqref{eq18.45}, \eqref{eq18.5} and \eqref{eq18.8},
we obtain
\begin{eqnarray*}
        2n_1-1+8\varepsilon n_1&\geq& k+(k-\tau) + (2n_1-1 -2k+\tau -\frac{(2k-\tau-1)^2}{n_1-1}) + \frac{k}{32}\\
        &\geq& 2n_1-1+\frac{k}{32}-\frac{(2k-\tau-1)^2}{n_1-1} \geq  2n_1-1+\frac{k}{32}-\frac{4k^2}{n_1}\\
        &\geq& 2n_1-1+\frac{k}{64}.
\end{eqnarray*}
Therefore we have
$ k \leq 2^9 \varepsilon n_1.$

Finally, we conclude the arithmetic progression $P'=\{0,d,\hdots, md\}$ with common difference $d$ and size at most $\lfloor(1+\beta)n\rfloor\leq \lfloor (1+\mu)n \rfloor$ satisfies $|A\setminus P'| \leq n_2+2k \leq 2^{11}\varepsilon n \leq \mu n.$

\vspace{5pt}
This finishes the proof of Theorem~\ref{stab_technical}.
\end{proof}

\vspace{5pt}
We now prove Theorem~\ref{weak_linear_stability_four_translates}.
\begin{proof} [Proof of Theorem \ref{weak_linear_stability_four_translates}]
  Fix $\delta<2^{-10}$ and choose $\varepsilon = 2^{-40} \delta^8$. By Theorem \ref{equality_four_translates}, we may assume that $\varepsilon n \geq 1$. In particular, we have $\varepsilon^{1/2}n \geq 2^{20}$ and $\varepsilon^{1/8} \leq 2^{-10}$. We may also assume that $\min(B)=0$ and $\max(B)=m$.

\vspace{5pt}
The proof will proceed via the next three lemmas.

\begin{lemma}\label{1-16}
$$|\pi_m(A)|\geq (1-4\varepsilon^{1/4})n.$$
\end{lemma}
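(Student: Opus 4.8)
The plan is to play off the two fixed translates $0$ and $m$ against two random translates, exactly as the introduction promises. Set $\widetilde A = \pi_m(A)$ and $\widetilde B = \pi_m(B)$, and note $|\widetilde B| = n-1$ since $\min(B)=0$, $\max(B)=m$ give $0\equiv m$. The starting point is the basic identity from Section 2: for any $b$, the set $A+\{0,b,m\}$ contains $A\cup(A+m)$, which already has size at least $n + |\widetilde A|$ on the nose, since each occupied fibre mod $m$ contributes at least one extra point. The genuinely useful extra contribution comes from $A+b$: by Lemma~\ref{lem8.2} applied with $A_1 = A$, the expected number of points of $A+b$ that land \emph{outside} the fibres already hit by $A$ is at least $n\max(0, (|\widetilde B| - |\widetilde A|)/|\widetilde B|)$. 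Averaging over $b \in B\setminus\{m\}$ and combining, I get
\[
\E_{b\in B\setminus\{m\}}\bigl|A+\{0,b,m\}\bigr| \;\geq\; n + |\widetilde A| + n\cdot\frac{|\widetilde B| - |\widetilde A|}{|\widetilde B|},
\]
valid whenever $|\widetilde A| \leq |\widetilde B|$ (if $|\widetilde A| > |\widetilde B|$ that is impossible since $\widetilde A \subseteq \widetilde B$ need not hold — but in any case $|\widetilde A| \leq m$, and the case $|\widetilde A|$ close to $m$ close to $n$ is exactly the conclusion we want, so we may assume $|\widetilde A|$ is not too large). Actually the cleanest route is to use Theorem~\ref{technical_three_translates_Z} directly, which packages precisely this bound.

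Next I bring in a second random translate. The hypothesis \eqref{eq13.0} gives $\E_{b_2,b_3}|A+\{0,b_2,b_3,m\}| \leq 2n-1+\varepsilon n$. The set $A+\{0,b_2,b_3,m\}$ contains $A+\{0,b_2,m\}$ together with the extra points contributed by $A+b_3$ that avoid the fibres of $A$; applying Lemma~\ref{lem8.2} once more to estimate this extra contribution in expectation over $b_3$, and then the bound above for the expectation over $b_2$, I arrive at
\[
2n-1+\varepsilon n \;\geq\; n + |\widetilde A| + 2n\cdot\frac{|\widetilde B| - |\widetilde A|}{|\widetilde B|}
\]
(the factor $2$ because \emph{both} random translates contribute, and this is the point where the interaction of the two random points is what makes the argument tick). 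Writing $|\widetilde B| = n-1$ and $t = |\widetilde A|$, and using $\varepsilon n \geq 1$ so that constants absorb harmlessly, this rearranges to a quadratic-type inequality in $t$ forcing $(n-t)$ to be at most roughly $\varepsilon n$ \emph{or} forcing $t$ itself to be large; a short case check (the "small branch" $n - t = O(\varepsilon n)$ is far stronger than needed; the other branch gives $n - t \leq O(\varepsilon^{1/2} n)$ at worst) yields $|\widetilde A| \geq (1 - 4\varepsilon^{1/4})n$, with room to spare given $\varepsilon^{1/8} \leq 2^{-10}$.

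The main obstacle is bookkeeping rather than conceptual: one must be careful that "the extra contribution of $A+b_3$ beyond $A+\{0,b_2,m\}$" is genuinely bounded below by the quantity Lemma~\ref{lem8.2} controls, i.e.\ that points of $A+b_3$ outside $\pi_m^{-1}(\widetilde A)$ are not already counted inside $A+\{0,b_2,m\}$ — this needs $A+b_2 \subseteq \pi_m^{-1}(\widetilde B)$, which holds since $b_2 \in B$, so those points lie in fibres of $\widetilde B$, and the only ones of $A+b_3$ we are newly counting are those avoiding $\widetilde A$; the disjointness is then a fibre-by-fibre check. The second delicate point is that the two expectations (over $b_2$ and over $b_3$) must be taken over the same random set $B\setminus\{m\}$ and the inequalities combined via linearity, which is legitimate but must be written so that the $\max(0,\cdot)$ terms do not collapse. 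Once these are handled, extracting the numerical constant $4\varepsilon^{1/4}$ from the resulting inequality is routine, and the sub-optimal exponent $1/4$ (rather than $1$) is exactly why later lemmas need the boosting of Theorem~\ref{stab_technical}.
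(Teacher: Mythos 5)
The proposal has a genuine gap at its central step: the claimed inequality
\[
2n-1+\varepsilon n \;\geq\; n + |\widetilde A| + 2n\cdot\frac{|\widetilde B| - |\widetilde A|}{|\widetilde B|}
\]
does not follow, because the two ``extra contributions'' from the random translates $b_2$ and $b_3$ are not disjoint. The set $A+\{0,b_2,b_3,m\}$ contains $A+\{0,b_2,m\}$, and the additional points from $A+b_3$ must be counted \emph{outside} $A+\{0,b_2,m\}$, not merely outside $\pi_m^{-1}(\widetilde A)$. But $A+b_2$ lives in the fibres $\widetilde A + \widetilde b_2$, which need not be contained in $\widetilde A$; in particular, when $\widetilde b_2 = \widetilde b_3$, the points of $A+b_3$ that escape $\pi_m^{-1}(\widetilde A)$ can be entirely swallowed by $A+b_2$, so the $b_3$-contribution you invoke Lemma~\ref{lem8.2} for may be zero. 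Your own parenthetical attempt to fix this (``those points lie in fibres of $\widetilde B$'') conflates $\widetilde b_2\in\widetilde B$ with $\widetilde A + \widetilde b_2 \subset \widetilde B$, which is false. A symptom that something is off: if your inequality held, rearranging with $|\widetilde B|=n-1$ would give $|\widetilde A| \geq n - O(\varepsilon n)$, i.e.\ a \emph{linear} bound, far stronger than the $\varepsilon^{1/4}$ loss the lemma actually asserts (and which the paper then must boost painfully in Theorem~\ref{stab_technical}). A bound that strong at this stage would be surprising, and indeed it is an artifact of the illegal double-count.

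The paper's proof handles exactly this overlap issue by running the two random translates \emph{sequentially} with a twist. It first (Claim~A) selects a single good $b_1$ for which $|A+\{0,b_1,m\}|$ is nearly $2n-1$ \emph{and} $\pi_m[A+\{0,b_1,m\}]$ misses at least $\varepsilon^{1/2}n$ fibres; the mechanism is to partition $A$ into $A_1$ (thick fibres, $|A^x|\geq 2$) and $A_2$ (thin fibres), and observe that for such a $b_1$ the excess beyond $A\cup(A+m)$ decomposes as (new fibres hit) plus at least half of $|(A_1+b_1)\setminus\pi_m^{-1}(\widetilde A)|$, because thick fibres put at least two points in each new fibre. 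If $|\widetilde A|$ were small, there would be many thick fibres, forcing both events to occur for a common $b_1$ by a Markov/union argument. Then (Claim~B), \emph{given} that $A+\{0,b_1,m\}$ is both large and occupies few fibres, a second random translate $b_2$ must push the count past $2n-1+\varepsilon n$, contradicting \eqref{eq13.0}. The thick/thin decomposition and the two-stage selection are the ideas your proposal is missing; without them there is no way to control the $b_2$--$b_3$ collision.
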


\begin{proof}
The proof of this lemma is based on two claims. Suppose for a contradiction that
\begin{equation*}
    |\widetilde{A}|\leq (1-4\varepsilon^{1/4})n.
\end{equation*}

\vspace{7pt}
{\bf Claim A.} {\em
There exists $b_1 \in B$ such that $$\bigg|A+\{0,b_1,m\}\bigg|\geq 2n-1-2^{-1}\varepsilon^{1/2} n$$ and $$\bigg|\pi_m[A+\{0,b_1,m\}]\bigg|\leq \big(1-\varepsilon^{1/2}\big) n.$$
}

\begin{proof}
By Theorem \ref{strengthening_of_three_translates} we have
$$ {\mathbb E}_{b\in B\setminus\{m\}}\bigg|A+\{0,b,m\}\bigg| \geq 2n-1.$$
By hypothesis we have
$$\max_{b\in B} \bigg|A+\{0,b,m\}\bigg|\leq 2n-1+\varepsilon n.$$
By Markov's inequality, it follows that
\begin{equation}\label{eq13.1}
{\mathbb P}_{b\in B\setminus \{m\}} \bigg( \bigg|A+\{0,b,m\}\bigg| \geq 2n-1-2^{-1}\varepsilon^{1/2} n \bigg)>1-4\varepsilon^{1/2}.
\end{equation}
Now form the partition
$A=A_1\sqcup A_2$
such that
\begin{equation}\label{eq13.15}
A_1^x=
\begin{cases}
A^x \text{ if } |A^x|\geq 2,\\
\emptyset \text{ if } |A^x|\leq 1.
\end{cases}
\text{ and } A_2^x=
\begin{cases}
A^x \text{ if } |A^x|\leq 1,\\
\emptyset \text{ if } |A^x|\geq 2.
\end{cases}
\end{equation}
From    $|\widetilde{A}|\leq (1-4\varepsilon^{1/4})n$
we get  $|A_1| \geq 4\varepsilon^{1/4}n$.
Lemma~\ref{lem8.2} gives
\begin{eqnarray*}
        {\mathbb E}_{b\in B\setminus\{m\}}\bigg|(A_1+b)\setminus \pi^{-1}(\widetilde{A})\bigg|
        &\geq& |A_1|\max\bigg(0,\frac{|\widetilde{B}| - |\widetilde{A}|}{|\widetilde{B}|}\bigg)\\
         &\geq& 4\varepsilon^{1/4}n \frac{n-1-(1-4\varepsilon^{1/4})n}{n-1}
         \geq 8 \varepsilon^{1/2}n.
\end{eqnarray*}
Trivially, we have
$$ \max_{b\in B\setminus \{m\}} \bigg|(A_1+b)\setminus \pi^{-1}(\widetilde{A})\bigg| \leq n.$$
By Markov's inequality, we deduce
\begin{equation}\label{eq13.2}
 {\mathbb P}_{b\in B \setminus \{m\}} \bigg( \bigg|(A_1+b)\setminus \pi^{-1}(\widetilde{A}) \bigg| \geq 4\varepsilon^{1/2} n  \bigg) > 4\varepsilon^{1/2} .
\end{equation}
By \eqref{eq13.1} and \eqref{eq13.2}, it follows that there exists $b_1 \in B \setminus \{m\}$ such that
$$ \bigg|A+\{0,b_1,m\}\bigg| \geq 2n-1-2^{-1}\varepsilon^{1/2}n $$
and
\begin{equation}\label{eq13.3}
    \bigg|(A_1+b_1)\setminus \pi^{-1}(\widetilde{A})  \bigg| \geq 4\varepsilon^{1/2} n.
\end{equation}
By \eqref{new-sum-eq}, together with \eqref{eq13.0}, \eqref{eq13.15} and \eqref{eq13.3}, it follows that
\begin{eqnarray*}
        2n-1+\varepsilon n \geq \bigg|A+\{0,b_1,m\}\bigg|
        &\geq& \bigg|A\cup (A+m) \bigg| +\bigg|[A+b_1] \setminus [A\cup (A+m)]\bigg|\\
        &\geq& |A|+|\widetilde{A}|+\bigg|(A+b_1)\setminus \pi_m^{-1}(\widetilde{A})\bigg|\\
        &\geq& |A|+|\widetilde{A}|+\bigg|(\widetilde{A}+\widetilde{b}_1)\setminus \widetilde{A}\bigg|+\frac{1}{2}\bigg|(A_1+b_1)\setminus \pi_m^{-1}(\widetilde{A})\bigg|\\
        &\geq& (1+2\varepsilon^{1/2})n+\bigg|\widetilde{A}\cup (\widetilde{A}+\widetilde{b}_1)\bigg|.
\end{eqnarray*}
Hence
$$\bigg|\pi_m[A+\{0,b_1,m\}]\bigg|= \bigg|\widetilde{A}\cup (\widetilde{A}+\widetilde{b}_1)\bigg| \leq \big(1-\varepsilon^{1/2}\big) n.$$

\vspace{5pt}
This establishes Claim A.
\end{proof}

\vspace{7pt}
{\bf Claim B.} {\em
$${\mathbb E}_{b_2\in B\setminus \{m\}} \bigg|A+\{0,b_1, b_2,m\}\bigg|>2n-1+\varepsilon n.$$
}

\begin{proof}
From the first part of Claim A we get
$$\bigg|A+\{0,b_1,m\}\bigg|\geq 2n-1-2^{-1}\varepsilon^{1/2} n$$
From the second part of Claim A and Lemma~\ref{lem8.2}  we get
\begin{eqnarray*}
        {\mathbb E}_{b_2\in B\setminus \{m\}} \bigg|[A+b_2]\setminus [A+\{0,b_1,m\}]\bigg| &\geq& {\mathbb E}_{b_2\in B\setminus \{m\}}\bigg|[A+b_2]\setminus \pi_m^{-1}[\widetilde{A}\cup (\widetilde{A}+\widetilde{b}_1)]\bigg|\\
        &\geq& |A|\max\bigg(0, \frac{|\widetilde{B}|- |\widetilde{A}\cup (\widetilde{A}+\widetilde{b}_1)|}{|\widetilde{B}|} \bigg)\\
        &\geq&  n \max \bigg(0, \frac{n-1-(1-\varepsilon^{1/2})n}{n-1} \bigg)
        \geq \frac{3}{4}\varepsilon^{1/2}n.
\end{eqnarray*}
Combining the inequalities above, we obtain
$${\mathbb E}_{b_2\in B\setminus \{m\}} \bigg|A+\{0,b_1, b_2,m\}\bigg|\geq 2n-1+\frac{1}{4}\varepsilon^{1/2}n>2n-1+\varepsilon n.$$

\vspace{5pt}
This proves Claim B.
\end{proof}
This completes the proof of Lemma~\ref{1-16}, as we see that we have our desired contradiction.
\end{proof}

\vspace{5pt}
We now move on to our second lemma.

\begin{lemma}\label{lem11.2}
  There exists an arithmetic progression $Q$ of size $\lfloor (1+32\varepsilon^{1/4})n\rfloor$ such that
  $B \subset Q$.
\end{lemma}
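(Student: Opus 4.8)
The plan is to use Lemma~\ref{1-16} together with a short pigeonhole argument on fibres mod $m$. By Lemma~\ref{1-16} we know $|\widetilde{A}| = |\pi_m(A)| \geq (1-4\varepsilon^{1/4})n$, so all but at most $4\varepsilon^{1/4}n$ residues mod $m$ are hit by $A$. The first step is to show $m$ is not much bigger than $n$: since $|\widetilde{A}| \le m$ we get $m \ge (1-4\varepsilon^{1/4})n$, but for the upper bound I would feed the information about $\widetilde{A}$ back into the four-translate hypothesis. Concretely, take the four points $0, m$ and two random $b_2, b_3 \in B \setminus \{m\}$; then $|A + \{0,b_2,b_3,m\}| \ge |A \cup (A+m)| \ge |A| + |\widetilde A|$, and more precisely, applying Theorem~\ref{strengthening_of_three_translates} (whose bound involves $(2|\pi_m(A)| - m)(m - (|B|-1))$), the hypothesis \eqref{eq13.0} forces $(2|\widetilde A| - m)(m - n + 1)$ to be small. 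Since $2|\widetilde A| - m \ge (1 - 8\varepsilon^{1/4})n$ is bounded below by roughly $n$, we conclude $m - n + 1 = O(\varepsilon^{1/4} n)$, i.e. $m \le (1 + O(\varepsilon^{1/4}))n$. Tracking the constants carefully, and using $\varepsilon = 2^{-40}\delta^8$ so that $\varepsilon^{1/4}$ is tiny, this should give $m + 1 \le (1 + 32\varepsilon^{1/4})n$ (perhaps with a slightly smaller constant, leaving room).

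Once $m$ is controlled, the second step is immediate: $B \subset [0, m] = \{0, 1, \dots, m\}$ by the normalization $\min(B) = 0$, $\max(B) = m$, and this interval is an arithmetic progression with common difference $1$ of size $m + 1 \le \lfloor(1 + 32\varepsilon^{1/4})n\rfloor$. So we simply set $Q = [0,m]$ (padding to exactly the stated length if desired) and we are done.

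The main obstacle is the first step — extracting a good upper bound on $m$ from the hypothesis. The subtlety is that Theorem~\ref{strengthening_of_three_translates} is stated for the three-translate expectation $\E_{b \in B \setminus \{m\}}|A + \{0,b,m\}|$, not directly for the four-translate quantity; I would need to bound the three-translate expectation above by the four-translate maximum in \eqref{eq13.0} (which is trivial, as $A + \{0,b,m\} \subset A + \{b_1,b_2,b_3,b_4\}$ for any completion to four points, provided $|B| \ge 4$ — and if $|B| < 4$ the problem is degenerate and handled by $\varepsilon n \ge 1$). Then one reads off
\[
  \frac{(2|\widetilde A| - m)(m - n + 1) - 1}{n - 1} \le \varepsilon n,
\]
and since $2|\widetilde A| - m \ge 2(1 - 4\varepsilon^{1/4})n - m$; to make this useful one also needs $m$ not already huge, which one gets from the crude bound $m \le |A| + |A+m| \le |A + \{0,b,m\}| \le 2n - 1 + \varepsilon n$ before bootstrapping. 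Combining, $m - n + 1 \le \frac{(\varepsilon n)(n-1) + 1}{2|\widetilde A| - m} \le \frac{2\varepsilon n^2}{(1 - 10\varepsilon^{1/4})n} \le 4\varepsilon n$, which is far better than needed; the slack here is what lets us absorb all the rounding and the passage between $|B| - 1$ and $n$. The rest is bookkeeping with the explicit choice of $\varepsilon$.
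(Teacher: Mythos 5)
There is a genuine gap: you implicitly assume that $Q$ can be taken to be the interval $[0,m]$, i.e.\ that the common difference of $Q$ must be $1$ and hence that $m$ must be $(1+O(\varepsilon^{1/4}))n$. This fails. Consider $A=B=\{0,2,4,\dots,2(n-1)\}$: the hypothesis \eqref{eq13.0} holds easily (the four-translate sumset has size about $n$), and with $m=2(n-1)$ we have $|\pi_m(A)|=n-1\geq(1-4\varepsilon^{1/4})n$, so Lemma~\ref{1-16} is satisfied, yet $m\approx 2n$, far larger than $(1+32\varepsilon^{1/4})n$. In this example $2|\widetilde{A}|-m=0$, so Theorem~\ref{strengthening_of_three_translates} supplies no extra term, and your step ``$2|\widetilde{A}|-m\geq(1-8\varepsilon^{1/4})n$'' is simply false: that inequality needs $m\leq(1+O(\varepsilon^{1/4}))n$, which is precisely what you are trying to prove, so the argument is circular. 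The ``crude bound'' $m\leq|A+\{0,b,m\}|$ that you invoke to break the circularity is not a valid inequality (there is no reason the sumset should span an interval of length $m$).

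The paper's argument takes a fundamentally different route that you cannot avoid here. It works inside $\mathbb{Z}_m$: Claim~A shows that every element of $\widetilde{B}+\widetilde{B}$ is an ``almost-period'' of $\widetilde{A}$, Claim~B deduces via an additive-energy count that $|\widetilde{B}+\widetilde{B}|\leq(1+32\varepsilon^{1/4})|\widetilde{B}|$, and then Kneser's theorem in $\mathbb{Z}_m$ shows $\widetilde{B}+\widetilde{B}$ is (since $0\in\widetilde{B}$) a subgroup $\widetilde{H}$ of the required size containing $\widetilde{B}$. The progression $Q$ is then the unique AP with endpoints $0$ and $m$ projecting onto $\widetilde{H}$ --- and in general its common difference is $>1$, which is exactly the case your proposal cannot handle.
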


\begin{proof}
The proof of this lemma is based on the following two claims.

\vspace{7pt}
{\bf Claim A.} {\em
For every $\widetilde{c} \in \widetilde{B}+ \widetilde{B}$ there exists a set $\widetilde{A}' \subset \widetilde{A}$ such that $$ \widetilde{A}'+\widetilde{c}\subset \widetilde{A} \text{ and } |\widetilde{A}'|\geq (1-10\varepsilon^{1/4})n.$$
}
\begin{proof}
Let $b_1, b_2 \in B$
such that
$\widetilde{c}=\widetilde{b}_1+ \widetilde{b}_2$,
and fix
$b\in \{b_1,b_2\}$.
By \eqref{eq13.0}, \eqref{new-sum-eq} and Lemma~\ref{1-16} we have
\begin{eqnarray*}
        2n-1+\varepsilon n &\geq& \bigg|A+\{0,b,m\}  \bigg|
        \geq \bigg|A\cup (A+m)\bigg|+ \bigg|[A+b]\setminus [A\cup (A+m)] \bigg|\\
        &\geq& |A|+|\widetilde{A}| + \bigg| (\widetilde{A}+\widetilde{b})\setminus \widetilde{A} \bigg|
        \geq (2-4\varepsilon^{1/4})n +\bigg| (\widetilde{A}+\widetilde{b})\setminus \widetilde{A} \bigg|.
\end{eqnarray*}
Therefore
$ \bigg| (\widetilde{A}+\widetilde{b})\setminus \widetilde{A} \bigg| \leq 5\varepsilon^{1/4}n,$
and in particular, we have
$ \bigg| (\widetilde{A}+\widetilde{c})\setminus \widetilde{A} \bigg| \leq 10\varepsilon^{1/4}n.$
We conclude that the set $\widetilde{A}'=\{\widetilde{a}\in \widetilde{A} \text{ : } \widetilde{a}+\widetilde{c}\in \widetilde{A} \}$
has size $|\widetilde{A}'|\geq (1-10\varepsilon^{1/4})n$, so Claim A is proved.
\end{proof}

\vspace{7pt}
{\bf Claim B.} {\em
$$|\widetilde{B}+\widetilde{B}|\leq (1+32\varepsilon^{1/4})|\widetilde{B}|.$$
}

\begin{proof}
Recall that $|\widetilde{A}| \leq n \text{ and } |\widetilde{B}|=n-1.$
Consider the set
\[
E(\widetilde{A})=\{(a_1,a_2,a_3,a_4)\in \widetilde{A}^4 \text{ : } a_1-a_2=a_3-a_4\}.
\]
Trivially,
$ |E(\widetilde{A})| \leq n^3.$
On the other hand, by the previous claim we have
$|E(\widetilde{A})|\geq (1-10\varepsilon^{1/4})^2n^2|\widetilde{B}+\widetilde{B}|.$
This completes the proof of Claim B.
\end{proof}

\vspace{5pt}
Returning to the proof of Lemma~\ref{lem11.2}, by Claim B we have $|\widetilde{B}+\widetilde{B}|\leq (1+32\varepsilon^{1/4})|\widetilde{B}|.$
Kneser's inequality \cite{Knes} now implies that $\widetilde{B}+\widetilde{B}$ is a union of cosets of a subgroup $\widetilde{H}$ of $\mathbb{Z}_m$  with $|\widetilde{H}|\geq (1-32\varepsilon^{1/4})|\widetilde{B}|.$
Because $0 \in \widetilde{B}$, it follows that
$\widetilde{B}+\widetilde{B}=\widetilde{H}$ where $|\widetilde{H}| \leq (1+32\varepsilon^{1/4})|\widetilde{B}|$ \ and \ $|\widetilde{B} \setminus \widetilde{H}|=0.$
If we let $Q$ be the unique arithmetic progression in $\mathbb{Z}$ satisfying
$ \{0,m\} \subset Q \subset [0,m]$ and  $\pi_m(Q)=\widetilde{H}$
then we can conclude that
$ |Q|=1+|\widetilde{H}| \leq (1+32\varepsilon^{1/4})n  \text{ and } B \subset Q.$
This finishes the proof of Lemma~\ref{lem11.2}.
\end{proof}

\vspace{5pt}
Our final lemma is as follows.

\begin{lemma}\label{weak_A}
There exists a translate $P$ of $Q$ such that $|A\setminus P|\leq 16\varepsilon^{1/8}n.$
\end{lemma}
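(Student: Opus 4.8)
\textbf{Proof proposal for Lemma~\ref{weak_A}.}

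The plan is to locate, inside the fibre structure mod $m$, a short interval of residues on which $A$ is almost fully supported, and then to show that the lift of this interval to a genuine arithmetic progression in $\mathbb{Z}$ captures all but $O(\varepsilon^{1/8}n)$ points of $A$. More precisely, recall from Lemma~\ref{lem11.2} that $\widetilde{B}+\widetilde{B}=\widetilde{H}$ is a subgroup of $\mathbb{Z}_m$ of size at most $(1+32\varepsilon^{1/4})n$, and $\widetilde{B}\subset\widetilde{H}$. Since $\widetilde{H}$ is a subgroup of $\mathbb{Z}_m$ it has the form $(m/h)\mathbb{Z}_m$ for some divisor $h\mid m$, with $|\widetilde{H}|=h$. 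I would first argue that $A$ is concentrated on $\widetilde{H}$-cosets: by Lemma~\ref{1-16}, $|\widetilde{A}|\ge(1-4\varepsilon^{1/4})n$, and since (by Claim~A of Lemma~\ref{lem11.2}, applied with $\widetilde c$ ranging over $\widetilde H$) translation by any $\widetilde c\in\widetilde H$ fixes all but $10\varepsilon^{1/4}n$ elements of $\widetilde A$, the set $\widetilde A$ is within symmetric difference $O(\varepsilon^{1/4}n)$ of a single $\widetilde H$-coset; after translating $A$ we may take that coset to be $\widetilde H$ itself, and then $|A\setminus\pi_m^{-1}(\widetilde H)|=O(\varepsilon^{1/4}n)$.

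Now set $d'=m/h$, so $R'=d'\mathbb{Z}$ is the lift of $\widetilde H$ and $A$ lives on $R'$ up to $O(\varepsilon^{1/4}n)$ points. The remaining task is to show that $A\cap R'$ is almost contained in an interval of length about $n$ within $R'$, i.e. that modulo $d'$ it does not spread out. Here I would reuse the technology of the proof of Theorem~\ref{stab_technical}: take $b_1=0$, $b_4=m$, so that $A+\{0,b_1,b_4,m\}=A\cup(A+m)$ already has size $|A|+|\widetilde A|\ge(2-4\varepsilon^{1/4})n$, and the four-element hypothesis \eqref{eq13.0} leaves only a surplus of $O(\varepsilon^{1/4}n)$ for the union with two further translates $A+b_2,A+b_3$. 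Restricting attention to the fibre group $R'$ and running the fibre count of Theorem~\ref{technical_three_translates_Z} (or more directly of \eqref{thm1.0.0}) inside $R'$ for the part $A_c$ of $A$ lying strictly between $-m$ and $0$ shows that $A_c$ cannot have size much above $n$ without forcing $|A+\{0,b_2,b_3,m\}|$ to exceed $2n-1+\varepsilon n$; combined with the analogues of Claims~A and B of Theorem~\ref{stab_technical}, which say that any bulk of $A$ lying outside the central window $[-m,0)$ along $R'$ can be ``charged'' against a random extra translate to produce surplus of order (that bulk's size)$/32$, one concludes that all but $O(\varepsilon^{1/8}n)$ of $A$ lies in a window of length $m$ along $R'$. (The power $\varepsilon^{1/8}$ rather than $\varepsilon^{1/4}$ arises exactly as in Theorem~\ref{stab_technical}, from balancing a linear surplus term $k/32$ against a quadratic loss term $O(k^2/n)$, which costs a square root when $k$ is not yet known to be small; one square root is already spent passing from $\varepsilon$ to $\varepsilon^{1/2}$ to $\varepsilon^{1/4}$ in the earlier lemmas.) Taking $P$ to be the arithmetic progression of common difference $d'$ and length $h$ (hence $|P|=|Q|$ since both equal $\lfloor(1+32\varepsilon^{1/4})n\rfloor$ up to the rounding) consisting of this window of residues then gives $|A\setminus P|\le 16\varepsilon^{1/8}n$.

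The main obstacle, as in Theorem~\ref{stab_technical}, is the boosting step: one has to be careful that the two ``random'' extra translates $b_2,b_3\in B_1\setminus\{m\}$ genuinely contribute disjoint surplus — surplus from filling the central window $Y^c$ via $A_c+b_2$ and $A_c+m$, versus surplus from catching a far-flung chunk of $A$ via $A_1+b_3$ — and that the relevant probability estimates (the analogues of \eqref{eq18.14}, \eqref{eq18.16}, \eqref{eq18.17}) survive after we have only the weaker information $B\subset Q$ with $|Q|\le(1+32\varepsilon^{1/4})n$ rather than the sharper containment available later. Concretely, one needs that $[0,Ck)\cap B$ is dense for the relevant scale $k$, which follows from $|[0,m)\setminus B|=O(\varepsilon^{1/4}n)$, provided $k\gg\varepsilon^{1/4}n$; the case $k\lesssim\varepsilon^{1/4}n$ (equivalently $k\lesssim\varepsilon^{1/8}n$ after the square root) is exactly the good case where we are already done. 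Handling the split between these two regimes cleanly, and verifying that the central-window count is carried out in the subgroup $R'$ and not in all of $\mathbb{Z}$, are the points that require genuine care; everything else is a rerun of the estimates already established in the proof of Theorem~\ref{stab_technical}.
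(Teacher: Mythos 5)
Your plan does not match the paper's proof and has a genuine gap in the boosting step. You propose to rerun the machinery of Theorem~\ref{stab_technical} (the partition $A = A_l \sqcup A_c \sqcup A_r$, the surplus estimates of its Claims A and B, the balance of $k/32$ against $4k^2/n_1$) in order to conclude that $A$ is concentrated on a window of length $m$. But that argument fundamentally relies on the a priori hypothesis $|A\setminus P|\leq \alpha n$ of Theorem~\ref{stab_technical}, which is precisely what gives $k\leq 2^{-10}n_1$ in \eqref{eq18.5} and hence makes the linear surplus $k/32$ dominate the quadratic loss $(2k)^2/n_1$. At the point where Lemma~\ref{weak_A} is proved, no such bound is available --- indeed producing it is the entire content of the lemma --- so nothing rules out, say, $k$ of order $n/4$, in which case the final display in the proof of Theorem~\ref{stab_technical} gives no information. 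Your remark that ``the case $k\lesssim\varepsilon^{1/4}n$ is the good case'' does not help with this: the problematic regime is $k$ of order $n$, not $k$ small. A secondary gap is the reduction ``$\widetilde A$ is within symmetric difference $O(\varepsilon^{1/4}n)$ of a single $\widetilde H$-coset''; approximate invariance of $\widetilde A$ under every translate of $\widetilde H$, plus $|\widetilde A|\geq(1-4\varepsilon^{1/4})n$ and $|\widetilde H|\leq(1+32\varepsilon^{1/4})n$, does not by itself force $\widetilde A$ to live in one coset, and you do not supply the argument.

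For comparison, the paper's proof of Lemma~\ref{weak_A} avoids all of this and is a short, self-contained averaging argument that makes no use of the boosting machinery. It takes $X$ to be the first $\varepsilon^{1/8}n$ elements of $A$, assumes for contradiction that $|(x+Q)\cap A|\leq (1-16\varepsilon^{1/8})n$ for every $x\in X$, and computes the expectation of $\big|(X+b)\setminus (A\cup (A+m))\big|$ over $b\in B$. The assumed sparsity of $A$ on each $x+Q$ forces this expectation to be at least $15\varepsilon^{1/4}n$, while \eqref{eq13.0}, \eqref{new-sum-eq} and Lemma~\ref{1-16} force the corresponding maximum over $b$ (hence the expectation) to be at most about $5\varepsilon^{1/4}n$, a contradiction; the surviving $x\in X$ then gives $P=x+Q$. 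The only small subtlety in that proof is replacing $A+m$ by $X+m$ when intersecting with $X+b$ (valid because $x+b-m\leq x$), which keeps the negative contribution to the count controlled by $|X|$. You would do well to note that this direct route sidesteps entirely the difficulties you flag at the end of your proposal.
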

\begin{proof}
Let $X$ be the set of the first $\varepsilon^{1/8}n$ elements of $A$, and assume that $$0=\min(B)=\min(Q) \text{ and } m=\max(B)=\max(Q).$$
Suppose for a contradiction that for every $x\in X$ we have $|(x+Q) \cap A|\leq (1-16\varepsilon^{1/8})n,$
so, in particular,
$|(x+B) \cap A|\leq (1-16\varepsilon^{1/8})n.$
On the one hand, from this assumption we get
\begin{eqnarray*}
        &&{\mathbb E}_{b \in B} \bigg( [X+b] \setminus [A\cup (A+m)]\bigg) = \sum_{x\in X} {\mathbb P}_{b \in B} \bigg( x+b \not\in A\cup (A+m) \bigg)\\
        &=& \sum_{x\in X} {\mathbb P}_{b \in B} \bigg( x+b \not\in A\cup (X+m) \bigg)
        = \sum_{x\in X}  \frac{\bigg|[x+B]\setminus [A\cup(X+m)]\bigg|}{|B|} \\
        &\geq& \sum_{x\in X} \max\bigg(0, \frac{|B|-|(x+B)\cap A|-|X|}{|B|} \bigg)\\
        &\geq& \varepsilon^{1/8}n\max\bigg( 0, \frac{n-(1-16\varepsilon^{1/8})n-\varepsilon^{1/8}n}{n}\bigg)
        = 15\varepsilon^{1/4}n.
\end{eqnarray*}
On the other hand, from \eqref{eq13.0}, \eqref{new-sum-eq} and Lemma~\ref{1-16} we have
\begin{eqnarray*}
        2n-1+\varepsilon n \geq \max_{b\in B} \bigg|A+\{0,b,m\} \bigg|
        &\geq& \bigg|A\cup (A+m)  \bigg|+ \max_{b\in B} \bigg| [A+b] \setminus [A\cup (A+m)]  \bigg|\\
        &\geq& |A|+|\widetilde{A}| + \max_{b\in B}\bigg| [A+b] \setminus [A\cup (A+m)]  \bigg|\\
        &\geq& (2-4\varepsilon^{1/4})n +\max_{b\in B} \bigg| [A+b] \setminus [A\cup (A+m)]  \bigg|.
\end{eqnarray*}
This is a contradiction, as desired, completing our proof.
\end{proof}

\vspace{5pt}
With this, we have  proved Theorem~\ref{weak_linear_stability_four_translates}, since it is implied by
Lemma~\ref{lem11.2} and Lemma~\ref{weak_A}.
\end{proof}

\vspace{5pt}
To end this section, we prove Theorem~\ref{linear_stability_four_translates}.

\begin{proof}[Proof of Theorem \ref{linear_stability_four_translates}]
The strategy is to first apply Theorem~\ref{weak_linear_stability_four_translates}, and then apply Theorem~\ref{stab_technical} twice. Consider the function $\delta(\varepsilon)$ given by Theorem~\ref{weak_linear_stability_four_translates}, and the functions $\nu(\alpha, \beta, \varepsilon), \mu(\alpha, \beta, \varepsilon)$ given by Theorem~\ref{stab_technical}. It is easy to check that
$\delta \rightarrow 0 \text{ as } \varepsilon \rightarrow 0,$
and
$  \mu, \nu\rightarrow 0 \text{ as } \alpha, \beta, \varepsilon \rightarrow 0.$
It is also a simple computational task to find $c>0$ such that
$ \mu_1=\mu(\delta (\varepsilon), \delta(\varepsilon), \varepsilon)$,  $\nu_1=\nu(\delta (\varepsilon), \delta(\varepsilon), \varepsilon)$,
$ \mu_2=\mu(\mu_1,\nu_1,\varepsilon)$ and  $\nu_2=\nu(\mu_1,\nu_1,\varepsilon)$
satisfy
$$ \mu_2\leq c\varepsilon \text{ and } \nu_2 \leq \varepsilon+c\varepsilon^2 \text{ as } \varepsilon \rightarrow 0.$$
The result follows from these estimates.
\end{proof}

\vspace{5pt}
To end the paper, we give one of the many possible conjectures.

\vspace{5pt}
Our inverse theorem about $\mathbb{Z}$,
Theorem \ref{linear_stability_four_translates},
involves taking four elements from $B$ instead
of three elements. We believe that a similar result should hold with three elements. We state this as a
conjecture in the weaker form corresponding to
Theorem \ref{weak_linear_stability_four_translates} instead of
Theorem \ref{linear_stability_four_translates},
although we do believe that the stronger form should be true as well.

\vspace{5pt}
\begin{conjecture}\label{conj-three}
For every $\delta >0$ there exists $\varepsilon > 0$ such that
the following is true. Suppose that $A$ and $B$ are finite subsets of
${\mathbb Z}$ of equal size $n$ such that for any
elements $b_1,b_2,b_3 \in B$
we have $$|A+\{b_1,b_2,b_3\}| \leq (2+ \varepsilon) n - 1.$$ Then there exist arithmetic progressions $P,Q$ in
$\mathbb{Z}$ with the same common difference such that 
\[
B\subset Q \text{ and } |A \Delta P|,
|B\Delta Q| \leq \delta n.
\]
\end{conjecture}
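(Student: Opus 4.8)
The plan is to follow the three-stage architecture of the proof of Theorem~\ref{weak_linear_stability_four_translates}: writing $m=\max(B)$, first establish the analogue of Lemma~\ref{1-16}, that $A$ has an almost-full projection $\widetilde A=\pi_m(A)$; then use the energy argument of Claim~B of Lemma~\ref{lem11.2} together with Kneser's inequality \cite{Knes} to place $B$ in an arithmetic progression $Q$ of size $(1+o(1))n$; then locate a translate $P$ of $Q$ with $|A\setminus P|=o(n)$ exactly as in Lemma~\ref{weak_A}; and finally bootstrap. The good news is that Lemma~\ref{lem11.2} and Lemma~\ref{weak_A} already use only triples of the form $\{0,b,m\}$ (plus the internal energy count on $\widetilde A$), so they carry over once the three-translate analogue of Lemma~\ref{1-16} is available. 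Only two ingredients genuinely need four translates: Lemma~\ref{1-16}, and the bootstrapping Theorem~\ref{stab_technical}. All of the difficulty is in finding three-translate substitutes for these.

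For Lemma~\ref{1-16} the obstruction is sharp and concrete. Inequality \eqref{new-sum-eq} gives $|A\cup(A+m)|\ge n+|\widetilde A|$, and for uniform $b\in B$ Lemma~\ref{lem8.2} shows that $A+b$ contributes in expectation at least $n\cdot\frac{|\widetilde B|-|\widetilde A|}{|\widetilde B|}$ new fibres; but these two gains sum to exactly $2n-O(1)$, so no contradiction can be extracted from the triples $\{0,b,m\}$ alone when $|\widetilde A|<(1-c)n$. (This is precisely why $B=[0,n]$, $A=X\cup(X+n)$ defeats the naive approach; note however that for that $A$ a generic interior pair $0<b_2<b_3<m$ with $b_3$ close to $m$ already gives $|A+\{0,b_2,b_3\}|$ about $\frac{17}{8}n$, so such an $A$ violates the hypothesis of Conjecture~\ref{conj-three} and is no obstruction to it.) The new input must therefore come from triples that avoid $m$, and the natural route is a fibering inequality: $|A+\{b_1,b_2,b_3\}|\ge|A|+\big|\widetilde A+\{\widetilde b_1,\widetilde b_2,\widetilde b_3\}\big|$ up to a controlled error. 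If $\widetilde A$ has large tripling inside $\mathbb{Z}_m$, a suitable triple already makes the right-hand side exceed $(2+c)n$; if $\widetilde A$ has small tripling, a Pl\"unnecke/energy argument forces $\widetilde A$ to be close to a coset progression, and the ``two-scale'' slack of $A$ above its fibres then has to be paid for by taking $b_1,b_2,b_3$ in approximate arithmetic progression with common difference the coarse period. Turning this dichotomy into a clean bound $|A+\{b_1,b_2,b_3\}|\ge(2+c)n$ with $c$ bounded below in terms of the deficit $n-|\widetilde A|$ is the heart of the matter, and, I expect, the reason the conjecture is still open: the cancellation above shows that any loss incurred in transferring from $\widetilde A$ back to $A$, or in spreading the gain over three translates, is fatal, so the argument must be essentially lossless and must exploit the structure of the near-extremal configurations directly.

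For the bootstrapping step one cannot simply invoke Theorem~\ref{stab_technical}, since its hypothesis \eqref{eq18.-1} is a bound on \emph{quadruples}, and a triple bound only yields the useless estimate $|A+\{b_1,b_2,b_3,b_4\}|\le(3+\varepsilon)n$. Instead I would redo its proof using the triple $\{0,b,md\}$ with $b$ averaged (or chosen as in its Claims~A and~B) in place of the quadruple $\{0,b_2,b_3,md\}$; the one place this costs something is the term $(A_c+b_2)$ in the final lower bound for $|A_1+\{0,b_2,b_3,m\}|$, and recovering a comparable gain from just $|A_c\cup(A_c+m)|\ge|A_c|+|\pi_m(A_c)|$ (via Theorem~\ref{technical_three_translates_Z} applied to $A_c$, whose projection modulo $m$ is almost full once Stage one is in hand) seems feasible at the cost of a slightly worse constant. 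Granting this, the conjecture follows, and the analogous strengthening to a symmetric-difference bound, which the authors also believe, would follow as in the deduction of Theorem~\ref{linear_stability_four_translates} from Theorem~\ref{weak_linear_stability_four_translates}. The main obstacle, to repeat, is the lossless three-translate replacement for Lemma~\ref{1-16}.
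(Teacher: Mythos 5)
The statement you were asked about is Conjecture~\ref{conj-three}, which the paper explicitly leaves open; there is no proof in the paper to compare against. You have recognised this correctly, and what you have written is not a proof but an honest analysis of the obstruction, which is the right thing to do here.

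Your structural reading of the paper's argument is accurate: Lemmas~\ref{lem11.2} and~\ref{weak_A} do run entirely on triples of the form $\{0,b,m\}$ together with an energy count on $\widetilde A$, so they would indeed transfer once a three-translate version of Lemma~\ref{1-16} were available; and Lemma~\ref{1-16} (via its Claim~B, which uses $\{0,b_1,b_2,m\}$) and Theorem~\ref{stab_technical} are exactly the two places where the fourth translate is used essentially. Your identification of the precise cancellation that blocks the naive three-translate replacement for Lemma~\ref{1-16} — that \eqref{new-sum-eq} plus the expected gain from Lemma~\ref{lem8.2} sum to $2n-O(1)$ with nothing to spare — is correct and is the heart of why the paper stops at a conjecture. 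Your side remark that the example $B=[0,n]$, $A=X\cup(X+n)$ defeats the ``max, min, plus random'' strategy but does not satisfy the hypothesis of the conjecture itself (since a well-chosen interior triple already makes $|A+\{b_1,b_2,b_3\}|$ substantially larger than $2n$) is a genuine and useful observation that goes slightly beyond what the paper says, and it correctly explains why this example is an obstruction to a proof method rather than to the conjecture. The proposed dichotomy (large tripling of $\widetilde A$ vs.\ near-coset-progression structure via Pl\"unnecke) and the triple-only reworking of Theorem~\ref{stab_technical} are plausible directions, but, as you say yourself, you do not close the gap: the lossless transfer from $\widetilde A$ back to $A$ across three translates is exactly what is missing, and until that is supplied the argument is a programme, not a proof. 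Since the paper itself offers no proof either, there is nothing for you to have missed relative to the paper; the only caution is not to overstate the triple-only version of Theorem~\ref{stab_technical} as done, since you have only sketched where the cost would be absorbed.
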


We remark that if this conjecture is proved then, by virtue of
Theorem~\ref{strengthening_of_three_translates}, it would actually yield a strengthening of
itself in which $|B \Delta Q| \leq (1+o(1))\varepsilon n$.
However, as we remarked earlier, it is important to note, for Conjecture 3,
that the selection of three elements {\it cannot} be
done as in Theorem
\ref{intro_three_translates_Z}, by taking the maximum and minimum elements of $B$ plus one more
element.

\end{document}